\tikzset{
	ch/.style={circle,draw,on chain,inner sep=2pt},
	chj/.style={ch,join},
	every path/.style={shorten >=4pt,shorten <=4pt}
}
\newcommand{\RomanNumeralCaps}[1]
{\MakeUppercase{\romannumeral #1}}
\newtheorem{Def}{Definition}[section]
\newtheorem{cor}[Def]{Corollary}
\newtheorem{thm}{Theorem} [section]
\newtheorem{ex}[thm]{Example}
\newtheorem{rem}[thm]{Remark}
\newtheorem{prop}{Proposition}[section]
\numberwithin{equation}{section}
\title[Differential operator approach to
$\imath$-quantum groups ]{Differential operator approach to
 	$\imath$quantum groups and their
 	Oscillator representations}
\author[Zhaobing Fan]{Zhaobing Fan}
\address{Harbin Engineering University, Harbin, China}
\email{fanzhaobing@hrbeu.edu.cn}
\author[Jicheng Geng]{Jicheng Geng}
\address{Harbin Engineering University, Harbin, China}
\email{jcgeng@hrbeu.edu.cn}
\author[Shaolong Han]{Shaolong Han}
\address{Harbin Engineering University, Harbin, China}
\email{algebra@hrbeu.edu.cn}
\keywords{$\imath$quantum group, differential operator, oscillator representation, crystal basis}
\begin{document}
\begin{abstract}
For a quasi-split Satake diagram, we define a modified $q$-Weyl algebra, and show that there is an algebra homomorphism between it and the corresponding $\imath$quantum group.
In other words, we provide a differential operator approach to $\imath$quantum groups.
Meanwhile, the oscillator representations of $\imath$quantum groups are obtained.
The crystal basis of the irreducible subrepresentations of these oscillator representations are constructed.
\end{abstract}
\maketitle

\section{Introduction}
Quantum groups were introduced independently by Drinfeld and Jimbo in \cite{Dr86,J86}.
To date, the theory of quantum groups has played important roles in representation theory, low dimensional topology and mathematical physics etc.
From the perspective of the development of quantum groups, mathematicians have extended various theories of quantum groups to more and more generalized cartan data, such as from semi-simple cases to Kac-Moody cases, and then to Borcherds cases.
Another direction to develop quantum groups is the so-called $\imath$quantum groups, which are the main objects studied in the current paper.

Let $(\mathfrak g, \mathfrak g^{\theta})$ be a symmetric pair, where $\mathfrak g$ is a Kac-Moody algebra, $\theta$ is an involution of $\mathfrak g$ and $\mathfrak g^{\theta}$ is the $\theta$-stable subalgebra of $\mathfrak g$.
	The $\imath$quantum groups are coideal subalgebras of $\mathbf U_q(\mathfrak g)$, which specializes to $\mathbf U(\mathfrak g^\theta)$ at $q\to 1$ and denoted by $\mathbf U'_q(\mathfrak g^\theta)$.
In general, the algebra $\mathbf U_q(\mathfrak g^{\theta})$ is not a Hopf algebra.
The $\imath$quantum groups are classified by Satake diagrams. 
Moreover, $\imath$quantum groups are isomorphic to ordinary quantum groups if the Satake diagram is quasi-split of diagonal type.

In 2013, Bao and Wang initiated the $\imath$-program in \cite{BW18a}.
The purpose of this program is to generalize various results for ordinary quantum groups to $\imath$quantum groups.
From geometric side, in \cite{BKLW}, Bao, Kujawa, Li and Wang gave a geometric realization of $\mathbf U^{\jmath}$ and $\mathbf U^{\imath}$, the $\imath$quantum groups of type   $B/C$, by using equivariant function on double flag varieties.
This generalizes the classic work of
 Beilinson, Lusztig and MacPherson for ordinary quantum group in \cite{BLM}.
 Later on, the first author and his collaborators further gave a geometric realization of the $\imath$quantum groups of affine type $C$ in \cite{FLLLW20} by using a similar approach. 
 Moreover, the first author, Ma and Xiao provided another geometric realization of the $\imath$quantum groups $\mathbf U^{\jmath}$ in \cite{FMX} by using equivariant $K$-group of corresponding Steinberg variety.
 This fills Ginzburg and Vasserot's work in \cite{GV93} into $\imath$-program.

From algebraic side, $q$-Schur duality, Hall algebra and categorification are important theory related to quantum groups.
In the framework of $\imath$-program, these theories have been generalized to certain $\imath$quantum groups.
The Hecke-algebraic approach to the $\imath$quantum groups was given in \cite{FLLLWb}, which was a generalization of affine type $A$ construction in \cite{DF15}.
The three-parameter $q$-Schur duality between $\imath$quantum groups and Hecke algebras was studied in \cite{FLLLWW},
which can be degenerated to $q$-Schur dualities of affine types $B$, $C$ and $D$ by various specialization of parameters.
In \cite{R90, Br13},   Ringel and  Bridgeland gave a realization of the half part and entire of quantum groups by using Hall algebras, respectively.
The $\imath$Hall algebra approach toward $\imath$quantum groups was developed by Lu and Wang in series papers \cite{LW19a,LW21a,LW19b,LW20}.
 The highest weight theory and crystal (global) basis theory of $\imath$quantum groups $\mathbf U^{\jmath}$  was studied in \cite{W20} and \cite{W21}.
 The categorification of the modified  $\imath$quantum group $\dot{\mathbf U}^{\jmath}$ was developed in \cite{BSWW}.
In \cite{DW20,DW21}, Du and Wu presented a new realization of the $\imath$quantum groups $\mathbf U^\jmath$ and $\mathbf U^{\imath}$ in terms of a BLM type basis.

 In \cite{H}, Hayashi used differential operators on polynomial ring to construct spinor and oscillator representations for quantum groups of types  $A_{N-1}$, $B_N$, $C_N$, $D_N$ and $A_{N-1}^{(1)}$.
	In \cite{H00, ZH, DZ, DLZ}, Hu and Du further studied the differential operator realization  for quantum groups $U_q(\mathfrak{sl}_n)$, $U_q(\mathfrak{sp}_{2n})$, $U_q(\mathfrak{gl}_{m|n})$ and $U_q(\mathfrak {q}_n)$	 respectively.

In this paper, we provide a differential operator realization of 
quasi-split finite and affine
$\imath$quantum groups $ ^{\imath}U(\mathcal S)$, 
where $\mathcal S$ is the Satake diagram of types $A_r$ or $A_r^{(1)}$with nontrivial diagram involutions.

This realization can be regarded as an application of Hayashi's approach \cite{H} to $\imath$quantum groups.
To achieve this goal, we introduce the modified $q$-Weyl algebra $\mathbf{A}_q(\mathcal S)$  associated to the Satake diagrams $\mathcal S$, generated by certain linear operators on the polynomial ring $\mathbb Q(q)[X_0,X_1,\cdots,X_{r+1}]$,
and show that there is an algebra homomorphism from $^{\imath}U(\mathcal S)$ to $\mathbf{A}_q(\mathcal S)$.
Thus we obtain naturally a differential operator realization of $^{\imath}U(\mathcal S)$ on the $q$-Weyl algebra $\mathbf A_q(A_{r+1})$ and representations of $^{\imath}U(\mathcal S)$ on the polynomial ring which are called the oscillator representations.
Following the framework of crystal basis theory in \cite{Ka91} and \cite[Chapter 4]{HK}, we construct the crystal basis of irreducible modules arising from the oscillator representations of  $\imath$quantum groups $^{\imath}U(A_{2r+1})$, $^{\imath} U(A_{2r+1}^{(1)})$ and $^{\imath} U(A_{1}^{(1)})$.

The paper is organized as follows.
In Section \ref{Quantum groups}, we recall the differential operator realization of the quantum group $\mathbf U_q(\mathfrak{sl}_{r+2})$ and its oscillator representation. In Section \ref{iWeyl algebra}, we define the modified $q$-Weyl algebas $\mathbf{A}_q(\mathcal S)$ associated with Satake diagrams and study their representations. In Section \ref{Differential operator realization}, we construct the differential operator realizations of $\imath$quantum groups $^{\imath}U(\mathcal S)$. In Section \ref{Crystal basis}, we study the oscillator representations of $^{\imath}U(\mathcal S)$ and the crystal basis of irreducible modules over $\imath$quantum groups $^{\imath}U(A_{2r+1})$, $^{\imath} U(A_{2r+1}^{(1)})$ and $^{\imath} U(A_{1}^{(1)})$.

{\bf Acknowledgements.}
Z. Fan was partially supported by the NSF of China grant 11671108, the NSF of Heilongjiang Province grant JQ2020A001, and the Fundamental Research Funds for the central universities.

{\bf Notations.}
 Let
\begin{align*}
&[a]=\dfrac{q^a-q^{-a}}{q-q^{-1}},\quad {[a]}^k!=[ka][k(a-1)]\cdots[2k][k],\quad X^{{(a)}_k}=\dfrac{X^a}{{[a]}^k!}\ \ (a,k\in\mathbb Z),\\
&	(a;x)_0=1,\quad (a;x)_n=(1-a)(1-ax)\cdots(1-ax^{n-1}),\quad n\geq 1,\\
& \qquad\qquad\qquad\begin{bmatrix} n\\d \end{bmatrix} =
\begin{cases}
	\frac{[n][n-1]\ldots [n-d+1]}{{[d]}!}, & \text{ if }d > 0,
	\\
	1,  &\text{ if }d = 0,\\
	0, & \text{ if }d<0.
\end{cases}
\end{align*}

\section{Differential operator realization of $\mathbf U_q(\mathfrak{sl}_{r+2})$}\label{Quantum groups}
Let $r\ge 0$ and $\mathbb I=\{0,1,\cdots,r+1\}$.
The $q$-Weyl algebra $\mathbf{A}_q(A_{r+1})$ associated to Dynkin diagram of type $A_{r+1}$ is generated by $\mathcal D_i$, $\mathcal X_i$, $\mathcal M_i^{\pm 1}$ ($i\in\mathbb I$) over $\mathbb Q(q)$ subject to the following relations (see \cite{H}):
\begin{align}
	&\mathcal M_i\mathcal M_i^{-1}=\mathcal M_i^{-1}\mathcal M_i=1,\
	\mathcal M_i\mathcal M_j=\mathcal M_j\mathcal M_i,\label{MiMj}\\
	&\mathcal D_i\mathcal M_j=\mathcal M_j\mathcal D_i,\
	\mathcal X_i\mathcal M_j=\mathcal M_j\mathcal X_i,\
	\mathcal D_i\mathcal X_j=\mathcal X_j\mathcal D_i,\ \ \text{if $i\neq j$}, \\	
	&\mathcal D_i\mathcal D_j=\mathcal D_j\mathcal D_i,\ \mathcal X_i\mathcal X_j=\mathcal X_j\mathcal X_i,\\
	&\mathcal D_i\mathcal M_i=q\mathcal M_i\mathcal D_i,\
	\mathcal X_i\mathcal M_i=q^{-1}\mathcal M_i\mathcal X_i,\label{DiMiXiMi}\\
	&\mathcal{D}_i\mathcal{X}_i=\frac{q\mathcal M_i-q^{-1}\mathcal M_i^{-1}}{q-q^{-1}},\ \mathcal{X}_i\mathcal{D}_i=\frac{\mathcal M_i-\mathcal M_i^{-1}}{q-q^{-1}}.\label{DiXiXiDi}
\end{align}

Let $\mathbb P=\mathbb Q(q)[X_0,X_1,\cdots,X_{r+1}]$ be the polynomial ring over $\mathbb Q(q)$.
We define $\mathbf{A}_q(A_{r+1})$ acting on $\mathbb P$, for any $f\in \mathbb P$, as follows
\begin{equation}\label{q-differentiation operator}	
	\mathcal D_if(X_0,\cdots, X_{r+1})
	=\frac{f(X_0,\cdots,qX_i,\cdots,X_{r+1})-f(X_0,\cdots,q^{-1}X_i,\cdots,X_{r+1})}{qX_i-q^{-1}X_i},
\end{equation}
\begin{equation*}	
	\mathcal  X_if(X_0,\cdots,X_i,\cdots,X_{r+1})=X_if(X_0,\cdots,X_i,\cdots,X_{r+1}),
\end{equation*}
\begin{equation*}	
	\mathcal M_i^{\pm1}f(X_0,\cdots,X_i,\cdots,X_{r+1})=f(X_0,\cdots,q^{\pm1}X_i,\cdots,X_{r+1}).
\end{equation*}

\begin{thm}
[{\cite[Proposition 2.1]{H}}]\label{irreducible Aq module}
	The polynomial ring $\mathbb P$ is an irreducible $\mathbf{A}_q(A_{r+1})$-module.
\end{thm}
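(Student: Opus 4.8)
The plan is to work in the monomial basis of $\mathbb P$ and show that any nonzero submodule must contain the constant $1$ and hence every monomial. First I would record the action of the generators on a monomial $X^a = X_0^{a_0}\cdots X_{r+1}^{a_{r+1}}$, indexed by a multi-index $a = (a_0,\dots,a_{r+1}) \in \mathbb Z_{\ge 0}^{\mathbb I}$. Directly from \eqref{q-differentiation operator} and the definitions of $\mathcal X_i$ and $\mathcal M_i^{\pm 1}$ one computes
\begin{equation*}
\mathcal X_i X^a = X^{a + e_i}, \qquad \mathcal M_i^{\pm1} X^a = q^{\pm a_i} X^a, \qquad \mathcal D_i X^a = [a_i]\, X^{a - e_i},
\end{equation*}
where $e_i$ is the $i$-th coordinate vector and the last expression is understood to vanish when $a_i = 0$ (since $[0]=0$). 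Thus $\mathcal X_i$ raises the $X_i$-degree by one, $\mathcal D_i$ lowers it, and $\mathcal M_i$ merely records it as the scalar $q^{a_i}$.

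Now let $W \subseteq \mathbb P$ be a nonzero submodule and pick $0 \ne f \in W$. I would first drive $f$ down to a nonzero constant by repeatedly applying lowering operators. If $f$ has total degree $d > 0$, choose a monomial $X^a$ of maximal degree $|a| = d$ occurring in $f$; then some $a_i \ge 1$, and the claim is that $\mathcal D_i f$ is nonzero of total degree exactly $d-1$. Iterating this at most $d$ times produces a nonzero scalar lying in $W$, whence $1 \in W$ after rescaling. Applying the raising operators then gives $\mathcal X_0^{a_0}\cdots \mathcal X_{r+1}^{a_{r+1}}\cdot 1 = X^a$ for every $a$, so $W$ contains every monomial and therefore $W = \mathbb P$. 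Note the operators $\mathcal M_i^{\pm1}$ are not needed for this argument; the raising/lowering pair $\mathcal X_i,\mathcal D_i$ already suffices.

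The crux is the claim that $\mathcal D_i$ strictly lowers the degree without collapsing to zero, and this is the only step that requires genuine care. Over $\mathbb Q(q)$ the scalar $[a_i] = \tfrac{q^{a_i} - q^{-a_i}}{q - q^{-1}}$ is nonzero whenever $a_i \ge 1$, so $\mathcal D_i$ kills exactly the monomials independent of $X_i$ and sends each remaining monomial to a nonzero multiple of a strictly lower one. Since the assignment $a \mapsto a - e_i$ is injective on $\{a : a_i \ge 1\}$, distinct top-degree monomials of $f$ map to distinct monomials of degree $d-1$, so no cancellation can occur among their images; hence the degree-$(d-1)$ component of $\mathcal D_i f$ is nonzero and $\deg(\mathcal D_i f) = d-1$. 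This injectivity-on-monomials observation, together with the nonvanishing of $[a_i]$, closes the argument, and everything else is routine bookkeeping in the monomial basis.
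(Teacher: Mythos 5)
Your proof is correct. The paper itself only cites Hayashi for this statement, but its own proof of the analogous Theorem \ref{irreducible Aast module} follows exactly the same lower-to-the-constant-then-raise strategy; the only difference is in how cancellation is excluded: the paper isolates the lexicographically maximal exponent vector $\mathbf k$ and applies $\mathfrak d_0^{k_0}\cdots\mathfrak d_{r+1}^{k_{r+1}}$ in one shot (every other surviving term dies because some $a_i<k_i$), whereas you apply a single $\mathcal D_i$ at a time and rule out cancellation via the nonvanishing of $[a_i]$ together with the injectivity of $a\mapsto a-e_i$ on the top-degree monomials --- both devices are sound and do the same job.
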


The generators $\mathcal D_i$, $\mathcal X_i$, $\mathcal M_i$ ($i\in\mathbb I$) of the algebra   $\mathbf{A}_q(A_{r+1})$ can be viewed as linear operators on the polynomial ring $\mathbb P$. Moreover,
the operator $\mathcal D_i$ is called \textit{$q$-differentiation operator}, which satisfies
 the $q$-analog of the Leibniz rule as follows:
\begin{align*}
	&\mathcal D_i(f(X_0,\cdots,X_i,\cdots,X_{r+1})g(X_0,\cdots,X_i,\cdots,X_{r+1}))\\
	=&\mathcal D_if(X_0,\cdots,X_i,\cdots,X_{r+1})g(X_0,\cdots,q^{-1}X_i,\cdots,X_{r+1})\\
	&\qquad\qquad\qquad\qquad\qquad+f(X_0,\cdots,qX_i,\cdots,X_{r+1})\mathcal D_ig(X_0,\cdots,X_i,\cdots,X_{r+1}).
\end{align*}

 Let $\texttt{c}_{ij}=2\delta_{ij}-\delta_{i,j+1}-\delta_{i+1,j}$ for $i,j\in\mathbb Z$.
  Recall that the quantum group $\mathbf U_q(\mathfrak{sl}_{r+2})$ is generated by $E_i,~F_i,~K_i^{\pm1}$ ($0\leq i\leq r$)  satisfying the following relations:
\begin{align*}
	&K_iK_i^{-1}=K_i^{-1}K_i=1,K_iK_j=K_jK_i,\\
	&K_iE_jK_i^{-1}=q^{\texttt{c}_{ij}}E_j,~K_iF_jK_i^{-1}=q^{-\texttt{c}_{ij}}F_j,\\
	&E_iF_j-F_jE_i=\delta_{ij}\frac{K_i-K^{-1}_i}{q-q^{-1}},\\
	&	E_{i} E_{j} = E_{j} E_{i},~ F_{i} F_{j} = F_{j}  F_{i},\ \ \text{if $|i-j|>1$}, \\	
	&	E_{i}^2 E_{j}  - (q+q^{-1}) E_{i} E_{j} E_{i}+E_{j} E_{i}^2=0,\ \ \text{if $|i-j|=1$} ,\\
	&	F_{i}^2 F_{j}  - (q+q^{-1})  F_{i} F_{j} F_{i}+F_{j} F_{i}^2=0,\ \ \text{if $|i-j|=1$} .
\end{align*}

\begin{thm}[{\cite[Theorem 3.2]{H}}]\label{chi_r}
	There is a $\mathbb Q(q)$-algebra
	homomorphism $\chi_r:\mathbf{U}_q(\mathfrak {sl}_{r+2})\rightarrow \mathbf{A}_q(A_{r+1})$ such that
	\begin{equation*}
		E_i\mapsto\mathcal{X}_{i}\mathcal{D}_{i+1},\quad
		F_i\mapsto\mathcal{X}_{i+1}\mathcal{D}_{i},\quad
		K_i\mapsto \mathcal{M}_i \mathcal{M}_{i+1}^{-1}.
	\end{equation*}	
\end{thm}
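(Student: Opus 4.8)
The plan is to exploit the fact that $\mathbf{U}_q(\mathfrak{sl}_{r+2})$ is presented by generators and relations: it is the quotient of the free $\mathbb{Q}(q)$-algebra on $E_i,F_i,K_i^{\pm1}$ by the two-sided ideal generated by the displayed relations. Hence it suffices to check that the proposed images $\mathcal{X}_i\mathcal{D}_{i+1}$, $\mathcal{X}_{i+1}\mathcal{D}_i$, $\mathcal{M}_i\mathcal{M}_{i+1}^{-1}$ satisfy every one of those relations inside $\mathbf{A}_q(A_{r+1})$; the assignment then extends uniquely to an algebra homomorphism $\chi_r$. Every verification is carried out purely by rewriting, using the commutation relations \eqref{MiMj}--\eqref{DiXiXiDi}.

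For the Cartan-type relations, invertibility and mutual commutativity of the images of the $K_i$ are immediate from \eqref{MiMj}. For $K_iE_jK_i^{-1}=q^{\texttt{c}_{ij}}E_j$, I would first record the elementary rule, read off from \eqref{DiMiXiMi}, that conjugation by $\mathcal{M}_k$ multiplies $\mathcal{X}_j$ by $q^{\delta_{kj}}$ and $\mathcal{D}_j$ by $q^{-\delta_{kj}}$. Applying this with $\mathcal{M}_i\mathcal{M}_{i+1}^{-1}$ to $\mathcal{X}_j\mathcal{D}_{j+1}$ scales it by $q^{\delta_{ij}}\cdot q^{-\delta_{i+1,j}}\cdot q^{-\delta_{i,j+1}}\cdot q^{\delta_{ij}}=q^{2\delta_{ij}-\delta_{i,j+1}-\delta_{i+1,j}}=q^{\texttt{c}_{ij}}$, exactly as required; the relation for $F_j$ is identical with the opposite overall sign.

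For the commutator relation, when $i\neq j$ the index sets $\{i,i+1\}$ of $\mathcal{X}_i\mathcal{D}_{i+1}$ and $\{j,j+1\}$ of $\mathcal{X}_{j+1}\mathcal{D}_j$ overlap in at most one slot, and in every such overlap the two operators sharing that index are of the same type ($\mathcal{X}$ with $\mathcal{X}$, or $\mathcal{D}$ with $\mathcal{D}$), so they commute by \eqref{MiMj}--\eqref{DiMiXiMi}; hence $E_iF_j=F_jE_i$. For $i=j$ I would compute $E_iF_i$ and $F_iE_i$ directly: collapse the adjacent pair $\mathcal{D}_{i+1}\mathcal{X}_{i+1}$ (respectively $\mathcal{D}_i\mathcal{X}_i$) via \eqref{DiXiXiDi}, then move the remaining index-$i$ (respectively index-$(i+1)$) factors past the resulting $\mathcal{M}_{i+1}^{\pm1}$ (respectively $\mathcal{M}_i^{\pm1}$) and apply \eqref{DiXiXiDi} again. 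The difference of the two products, expanded in the commuting symbols $\mathcal{M}_i^{\pm1},\mathcal{M}_{i+1}^{\pm1}$, telescopes to $\tfrac{\mathcal{M}_i\mathcal{M}_{i+1}^{-1}-\mathcal{M}_i^{-1}\mathcal{M}_{i+1}}{q-q^{-1}}$, which is precisely the image of $\tfrac{K_i-K_i^{-1}}{q-q^{-1}}$. The relations for $|i-j|>1$ (both for $E$ and for $F$) are trivial, since then the four supporting indices are distinct and all operators commute.

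The main obstacle is the Serre relations. I would treat $|i-j|=1$, say $j=i+1$, and observe the key structural point: with $E_i\mapsto\mathcal{X}_i\mathcal{D}_{i+1}$ and $E_{i+1}\mapsto\mathcal{X}_{i+1}\mathcal{D}_{i+2}$, the factors $\mathcal{X}_i$ and $\mathcal{D}_{i+2}$ commute with all four operators occurring, so they may be pulled to the front of each of the three terms of the Serre expression. What remains is the rank-one identity $\mathcal{D}_{i+1}^2\mathcal{X}_{i+1}-(q+q^{-1})\mathcal{D}_{i+1}\mathcal{X}_{i+1}\mathcal{D}_{i+1}+\mathcal{X}_{i+1}\mathcal{D}_{i+1}^2=0$. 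I would prove this by using \eqref{DiMiXiMi} and \eqref{DiXiXiDi} to write each of $\mathcal{D}^2\mathcal{X}$, $\mathcal{D}\mathcal{X}\mathcal{D}$, $\mathcal{X}\mathcal{D}^2$ in the form $\tfrac{1}{q-q^{-1}}(\ast)\,\mathcal{D}$ with $(\ast)$ a Laurent polynomial in $\mathcal{M}_{i+1}$, and then checking that the coefficients of $\mathcal{M}_{i+1}$ and of $\mathcal{M}_{i+1}^{-1}$ both vanish, namely $q^2-(q+q^{-1})q+1=0$ and $-q^{-2}+(q+q^{-1})q^{-1}-1=0$. The case $j=i-1$ is symmetric and reduces instead to $\mathcal{X}^2\mathcal{D}-(q+q^{-1})\mathcal{X}\mathcal{D}\mathcal{X}+\mathcal{D}\mathcal{X}^2=0$, verified the same way, and the two Serre relations for the $F_i$ follow identically with the roles of $\mathcal{X}$ and $\mathcal{D}$ interchanged. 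The only delicate bookkeeping throughout is deciding which operators are central relative to the shared index, as that is exactly what permits the reduction to the rank-one computation.
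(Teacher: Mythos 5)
Your proposal is correct, but note that the paper does not prove this statement at all: Theorem \ref{chi_r} is imported verbatim from Hayashi \cite[Theorem 3.2]{H}, so there is no in-paper proof to compare against. Your direct verification --- checking each defining relation of $\mathbf{U}_q(\mathfrak{sl}_{r+2})$ on the proposed images and invoking the universal property of the presentation --- is exactly the standard argument (and the same strategy the paper itself uses for its analogous results, Proposition \ref{iotaast} and Theorem \ref{algebra homo}); all of your reductions check out, including the collapse of the Serre relations to the rank-one identities $\mathcal{D}^2\mathcal{X}-(q+q^{-1})\mathcal{D}\mathcal{X}\mathcal{D}+\mathcal{X}\mathcal{D}^2=0$ and $\mathcal{X}^2\mathcal{D}-(q+q^{-1})\mathcal{X}\mathcal{D}\mathcal{X}+\mathcal{D}\mathcal{X}^2=0$ and the computation $E_iF_i-F_iE_i=\frac{\mathcal{M}_i\mathcal{M}_{i+1}^{-1}-\mathcal{M}_i^{-1}\mathcal{M}_{i+1}}{q-q^{-1}}$.
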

By Theorems \ref{irreducible Aq module} and \ref{chi_r},  the polynomial ring $\mathbb P$ is a $\mathbf{U}_q(\mathfrak {sl}_{r+2})$-module via pull-back map. The  polynomial ring $\mathbb P$ is called the oscillator representation of $\mathbf{U}_q( \mathfrak {sl}_{r+2})$. Moreover, we have
\begin{thm}[{\cite[Theorem 4.1]{H}}]
	Let $s\ge 0$. Then the subspace of $\mathbb P$  spanned by the set $$\{X_0^{a_0}X_1^{a_1}\cdots X_{r+1}^{a_{r+1}}|a_0+a_1+\cdots+a_{r+1}=s\}$$
	is an irreducible $\mathbf{U}_q(\mathfrak {sl}_{r+2})$-module.
\end{thm}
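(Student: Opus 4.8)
The plan is to prove irreducibility by the standard highest-weight strategy: first show that the homogeneous piece $\mathbb P_s := \mathrm{span}\{X_0^{a_0}\cdots X_{r+1}^{a_{r+1}} : a_0+\cdots+a_{r+1} = s\}$ is a genuine $\mathbf U_q(\mathfrak{sl}_{r+2})$-submodule, then exhibit a distinguished cyclic vector that is contained in every nonzero submodule. For the first point I would compute the action of the generators on a monomial $m = X_0^{a_0}\cdots X_{r+1}^{a_{r+1}}$ through the homomorphism $\chi_r$ of Theorem \ref{chi_r}. Using \eqref{q-differentiation operator} and the identity $\mathcal D_{i+1}X_{i+1}^{a}=[a]X_{i+1}^{a-1}$, one finds $E_i\cdot m = [a_{i+1}]\,X_0^{a_0}\cdots X_i^{a_i+1}X_{i+1}^{a_{i+1}-1}\cdots X_{r+1}^{a_{r+1}}$ and symmetrically $F_i\cdot m = [a_i]\,X_0^{a_0}\cdots X_i^{a_i-1}X_{i+1}^{a_{i+1}+1}\cdots X_{r+1}^{a_{r+1}}$, while $K_i\cdot m = q^{a_i-a_{i+1}}m$. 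In particular each generator sends a monomial to a scalar multiple of a single monomial of the same total degree, so $\mathbb P_s$ is invariant; pictorially $E_i$ moves one ``box'' from slot $i+1$ to slot $i$ and $F_i$ moves one box the other way, with $q$-integer coefficients that vanish precisely when the relevant exponent is $0$.

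Next I would record that the monomials are simultaneous eigenvectors of the commuting operators $K_0,\dots,K_r$, with joint eigenvalue tuple $(q^{a_i-a_{i+1}})_{i=0}^r$. Since $\sum_i a_i = s$ is fixed, the $r+1$ differences $a_i-a_{i+1}$ together with the total degree recover all the exponents $a_i$, so distinct monomials in $\mathbb P_s$ carry distinct difference-tuples; as $q$ is transcendental over $\mathbb Q$, these yield distinct joint eigenvalues. By elementary linear algebra any $\mathbf U_q$-submodule $W\subseteq\mathbb P_s$, being $K_i$-invariant, is therefore spanned by the monomials it contains. The candidate cyclic vector is $v_0 = X_0^s$: from the formula above $E_i\cdot m = 0$ for all $0\le i\le r$ exactly when $a_1=\cdots=a_{r+1}=0$, so $v_0$ is, up to scalar, the unique highest weight vector in $\mathbb P_s$.

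It then remains to establish two connectivity statements, which I expect to be the only real work. First, every nonzero submodule $W$ contains $v_0$: choosing a monomial $m\in W$ and repeatedly applying box-lowering operators $E_{j-1}$ at the rightmost occupied slot $j\ge 1$, I would push all boxes leftward into slot $0$, noting that at each step the exponent being lowered is positive, hence the coefficient $[a_j]\ne 0$, and that $E_i$ keeps us on single monomials; the quantity $\sum_i i\,a_i$ strictly decreases, so an induction on it terminates at a nonzero multiple of $X_0^s$. Second, $v_0$ generates all of $\mathbb P_s$: starting from $X_0^s$ I would reach an arbitrary target $X_0^{a_0}\cdots X_{r+1}^{a_{r+1}}$ by applying box-raising operators $F_i$ to transport boxes rightward, again with nonzero $q$-integer coefficients, via a symmetric induction increasing $\sum_i i\,a_i$. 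Combining the two, every nonzero submodule equals $\mathbb P_s$, which gives irreducibility. The care needed in both arguments is purely combinatorial, namely choosing an admissible order of moves so that no $\mathcal D$ ever acts on a slot with exponent $0$; everything else reduces to the monomial formulas for $E_i$, $F_i$, $K_i$ established at the outset.
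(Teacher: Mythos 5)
Your argument is correct. The paper does not actually reprove this statement (it is quoted from Hayashi \cite{H}), but its proof of the analogous $\imath$-version, Theorem \ref{Uast irreducible module}, follows exactly your route---apply a product of $E_i$'s to a monomial to concentrate all exponents at $X_0^s$, then a product of $F_i$'s with nonzero $q$-integer coefficients to reach any target monomial---so your proposal matches the paper's approach; your joint $K_i$-eigenvalue argument showing that a nonzero submodule must contain a monomial is the one step that proof leaves implicit, and it is a correct and worthwhile addition.
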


\section{Modified $q$-Weyl algebra}\label{iWeyl algebra}
In this section, we shall introduce the modified $q$-Weyl algebra associated to a given Satake diagram.
Let $\mathbf I$ be a finite set and assume $|\mathbf I|\ge r+2$. Denote by $D$ the Dynkin diagram of type $A_{|\mathbf I|-1}$ or $A_{|\mathbf I|-1}^{(1)}$. 
The nodes in $D$ are labeled by the elements in $\mathbf I$. For $i,j\in\mathbf I$,  we define the Cartan datum ($\mathbf I$, $\cdot$) on $D$ as follows:
\begin{equation*}
	i\cdot j=
	\begin{cases}
		2,\quad \text{if $i=j$},\\
		-k,\quad \text{if there are $k$ lines connecting $i$ and $j$},\\
		0,\quad \text{otherwise}.
	\end{cases}
\end{equation*}

Let $(Y,X, \langle\cdot,\cdot\rangle)$ be a root datum of type $(\mathbf I, \cdot)$; cf. \cite[Section 2.1]{Lus93}. Let $\tau$ be an involution of the Cartan
datum ($\mathbf I$, $\cdot$) and $W$ be the Weyl group generated
by simple reflections $s_i$ for $i\in\mathbf I$. Given a subset $\mathbf I_{\bullet}\subset \mathbf I$, let $W_{\mathbf I_\bullet} =\langle s_i \mid i\in \mathbf I_{\bullet}\rangle$ be the parabolic subgroup of $W$ with longest element $w_{\bullet}$, and let $\rho_{\bullet}$ (resp. $\rho^{\vee}_{\bullet}$) be the half sum of all positive roots (resp. coroots) in the root system $R_{\mathbf I_\bullet}$ (resp. $R^{\vee}_{\mathbf I_\bullet}$).

Let $\mathbf I_{\circ} = \mathbf I \backslash \mathbf I_{\bullet}$. A Satake diagram $\mathcal S$ is the pair ($\mathbf I=\mathbf I_{\circ}\cup\mathbf I_{\bullet},\tau$) such that $\tau (\mathbf I_{\bullet}) = \mathbf I_{\bullet}$, the actions of $\tau$ and $-w_{\bullet}$ on $\mathbf I_{\bullet}$ coincide, and
$\langle \rho^\vee_{\bullet}, j' \rangle \in \mathbb Z$ if $\tau j =j \in \mathbf I_{\circ}$.
By abuse of notations, denote by $\mathbb I$ the set of $\tau$-orbits of $\mathbf I$.

 In this paper, we only consider the case $\mathbf I=\mathbf I_{\circ}$. We use natural numbers $\{0,1,2,\cdots\}$ to label the elements in $\mathbf I$.

\begin{Def}\label{modified q-Weyl algebra}
Given a Satake diagram $\mathcal S$, the associated modified $q$-Weyl algebra, $\mathbf{A}_q(\mathcal S)$, is generated by $\mathfrak d_i$, $\mathfrak x_i$, $\mathfrak m_i$ ($i\in \mathbb I$) over $\mathbb{Q}(q)$  subject to the following relations:
\begin{align}
	&\mathfrak m_i\mathfrak m_i^{-1}=\mathfrak m_i^{-1}\mathfrak m_i=1,\
	\mathfrak m_i\mathfrak m_j=\mathfrak m_j\mathfrak m_i,\label{mimj}\\
	&\mathfrak d_i\mathfrak m_j=\mathfrak m_j\mathfrak d_i,\
	\mathfrak x_i\mathfrak m_j=\mathfrak m_j\mathfrak x_i,\
	\mathfrak d_i\mathfrak x_j=\mathfrak x_j\mathfrak d_i,\ \ \text{if $i\neq j$},\\	
	&\mathfrak d_i\mathfrak d_j=\mathfrak d_j\mathfrak d_i,\ \mathfrak x_i\mathfrak x_j=\mathfrak x_j\mathfrak x_i,\\
	&\mathfrak d_i\mathfrak m_i=q^{\xi_i}\mathfrak m_i\mathfrak d_i,\
	\mathfrak x_i\mathfrak m_i=q^{-\xi_i}\mathfrak m_i\mathfrak x_i,\label{dimiximi}\\
	&\mathfrak{d}_i\mathfrak{x}_i=\frac{q^{\xi_i}\mathfrak m_i-q^{-\xi_i}\mathfrak m_i^{-1}}{q-q^{-1}},\ \mathfrak{x}_i\mathfrak{d}_i=\frac{\mathfrak m_i-\mathfrak m_i^{-1}}{q-q^{-1}},\label{dixixidi}
\end{align}
where $\xi_i=1-i\cdot\tau i$.
\end{Def}

In the rest of the paper, we shall consider the following Satake diagrams.

\begin{center}	
	\label{figure:j}	
	\begin{tikzpicture}
		\coordinate (A) at (0,1.5);
		\node at (A) {Diagram \RomanNumeralCaps{1}
			: Satake diagram of type $A_{2r+3}$.};
		\matrix [column sep={0.6cm}, row sep={0.5 cm,between origins}, nodes={draw = none,  inner sep = 3pt}]
		{
			\node(U1) [draw, circle, fill=white, scale=0.6, label = 0] {};
			&\node(U2)[draw, circle, fill=white, scale=0.6, label =1] {};
			&\node(U3) {$\cdots$};
			&\node(U4)[draw, circle, fill=white, scale=0.6, label =$r$] {};
			&\node(U5)[draw, circle, fill=white, scale=0.6, label =$r+1$] {};
			\\
			&&&&&
			\\
			\node(L1) [draw, circle, fill=white, scale=0.6, label =below:$2r+3$] {};
			&\node(L2)[draw, circle, fill=white, scale=0.6, label =below:$2r+2$] {};
			&\node(L3) {$\cdots$};
			&\node(L4)[draw, circle, fill=white, scale=0.6, label =below:$r+3$] {};
			&\node(L5)[draw, circle, fill=white, scale=0.6, label =below:$r+2$] {};
			\\
		};
		\begin{scope}
			
			\draw (U1) -- node  {} (U2);
			\draw (U2) -- node  {} (U3);
			\draw (U3) -- node  {} (U4);
			\draw (U4) -- node  {} (U5);
			\draw (U5) -- node  {} (L5);
			\draw (L1) -- node  {} (L2);
			\draw (L2) -- node  {} (L3);
			\draw (L3) -- node  {} (L4);
			\draw (L4) -- node  {} (L5);
			\draw (L1) edge [color = blue,<->, bend right, shorten >=4pt, shorten <=4pt] node  {} (U1);
			\draw (L2) edge [color = blue,<->, bend right, shorten >=4pt, shorten <=4pt] node  {} (U2);
			\draw (L4) edge [color = blue,<->, bend left, shorten >=4pt, shorten <=4pt] node  {} (U4);
			\draw (L5) edge [color = blue,<->, bend left, shorten >=4pt, shorten <=4pt] node  {} (U5);
		\end{scope}
	\end{tikzpicture}
\end{center}

\begin{center}
	\label{figure:i}
	\begin{tikzpicture}
		\coordinate (A) at (0,1.5);
		\node at (A) {Diagram \RomanNumeralCaps{2}: Satake diagram of type $A_{2r+2,1}$.};		
		\matrix [column sep={0.6cm}, row sep={0.5 cm,between origins}, nodes={draw = none,  inner sep = 3pt}]
		{
			\node(U1) [draw, circle, fill=white, scale=0.6, label = 0] {};
			&\node(U2)[draw, circle, fill=white, scale=0.6, label =1] {};
			&\node(U3) {$\cdots$};
			&\node(U5)[draw, circle, fill=white, scale=0.6, label =$r$] {};
			\\
			&&&&
			\node(R)[draw, circle, fill=white, scale=0.6, label =$r+1$] {};
			\\
			\node(L1) [draw, circle, fill=white, scale=0.6, label =below:$2r+2$] {};
			&\node(L2)[draw, circle, fill=white, scale=0.6, label =below:$2r+1$] {};
			&\node(L3) {$\cdots$};
			&\node(L5)[draw, circle, fill=white, scale=0.6, label =below:$r+2$] {};
			\\
		};
		\begin{scope}
			\draw (U1) -- node  {} (U2);
			\draw (U2) -- node  {} (U3);
			\draw (U3) -- node  {} (U5);
			\draw (U5) -- node  {} (R);
			
			\draw (L1) -- node  {} (L2);
			\draw (L2) -- node  {} (L3);
			\draw (L3) -- node  {} (L5);
			\draw (L5) -- node  {} (R);
			\draw (R) edge [color = blue,loop right, looseness=40, <->, shorten >=4pt, shorten <=4pt] node {} (R);
			\draw (L1) edge [color = blue,<->, bend right, shorten >=4pt, shorten <=4pt] node  {} (U1);
			\draw (L2) edge [color = blue,<->, bend right, shorten >=4pt, shorten <=4pt] node  {} (U2);
			\draw (L5) edge [color = blue,<->, bend left, shorten >=4pt, shorten <=4pt] node  {} (U5);
		\end{scope}
	\end{tikzpicture}
\end{center}
\begin{center}
	\begin{tikzpicture}
		\coordinate (A) at (0,1.5);
		\node at (A) {Diagram \RomanNumeralCaps{3}: Satake diagram of type $A^{(1)}_{2r+3}$.};
		\matrix [column sep={0.6cm}, row sep={0.5 cm,between origins}, nodes={draw = none,  inner sep = 3pt}]
		{
			\node(U1) [draw, circle, fill=white, scale=0.6, label = 0] {};
			&\node(U2)[draw, circle, fill=white, scale=0.6, label =1] {};
			&\node(U3) {$\cdots$};
			&\node(U4)[draw, circle, fill=white, scale=0.6, label =$r$] {};
			&\node(U5)[draw, circle, fill=white, scale=0.6, label =$r+1$] {};
			\\
			&&&&&
			\\
			\node(L1) [draw, circle, fill=white, scale=0.6, label =below:$2r+3$] {};
			&\node(L2)[draw, circle, fill=white, scale=0.6, label =below:$2r+2$] {};
			&\node(L3) {$\cdots$};
			&\node(L4)[draw, circle, fill=white, scale=0.6, label =below:$r+3$] {};
			&\node(L5)[draw, circle, fill=white, scale=0.6, label =below:$r+2$] {};
			\\
		};
		\begin{scope}
			\draw (L1) -- node  {} (U1);
			\draw (U1) -- node  {} (U2);
			\draw (U2) -- node  {} (U3);
			\draw (U3) -- node  {} (U4);
			\draw (U4) -- node  {} (U5);
			\draw (U5) -- node  {} (L5);
			\draw (L1) -- node  {} (L2);
			\draw (L2) -- node  {} (L3);
			\draw (L3) -- node  {} (L4);
			\draw (L4) -- node  {} (L5);
			\draw (L1) edge [color = blue,<->, bend right, shorten >=4pt, shorten <=4pt] node  {} (U1);
			\draw (L2) edge [color = blue,<->, bend right, shorten >=4pt, shorten <=4pt] node  {} (U2);
			\draw (L4) edge [color = blue,<->, bend left, shorten >=4pt, shorten <=4pt] node  {} (U4);
			\draw (L5) edge [color = blue,<->, bend left, shorten >=4pt, shorten <=4pt] node  {} (U5);
		\end{scope}
	\end{tikzpicture}
\end{center}
\begin{center}
	\begin{tikzpicture}
		\coordinate (A) at (0,1.5);
		\node at (A) {Diagram \RomanNumeralCaps{4}: Satake diagram of type $A^{(1)}_{2r+2,1}$.};
		\matrix [column sep={0.6cm}, row sep={0.5 cm,between origins}, nodes={draw = none,  inner sep = 3pt}]
		{
			\node(U1) [draw, circle, fill=white, scale=0.6, label = 0] {};
			&\node(U2)[draw, circle, fill=white, scale=0.6, label =1] {};
			&\node(U3) {$\cdots$};
			&\node(U5)[draw, circle, fill=white, scale=0.6, label =$r$] {};
			\\
			&&&&
			\node(R)[draw, circle, fill=white, scale=0.6, label =$r+1$] {};
			\\
			\node(L1) [draw, circle, fill=white, scale=0.6, label =below:$2r+2$] {};
			&\node(L2)[draw, circle, fill=white, scale=0.6, label =below:$2r+1$] {};
			&\node(L3) {$\cdots$};
			&\node(L5)[draw, circle, fill=white, scale=0.6, label =below:$r+2$] {};
			\\
		};
		\begin{scope}
			\draw (U1) -- node  {} (U2);
			\draw (U2) -- node  {} (U3);
			\draw (U3) -- node  {} (U5);
			\draw (U5) -- node  {} (R);
			\draw (U1) -- node  {} (L1);
			\draw (L1) -- node  {} (L2);
			\draw (L2) -- node  {} (L3);
			\draw (L3) -- node  {} (L5);
			\draw (L5) -- node  {} (R);
			\draw (R) edge [color = blue,loop right, looseness=40, <->, shorten >=4pt, shorten <=4pt] node {} (R);
			\draw (L1) edge [color = blue,<->, bend right, shorten >=4pt, shorten <=4pt] node  {} (U1);
			\draw (L2) edge [color = blue,<->, bend right, shorten >=4pt, shorten <=4pt] node  {} (U2);
			\draw (L5) edge [color = blue,<->, bend left, shorten >=4pt, shorten <=4pt] node  {} (U5);
		\end{scope}
	\end{tikzpicture}
\end{center}
\begin{center}
	\begin{tikzpicture}
		\coordinate (A) at (0,1.5);
		\node at (A) {Diagram \RomanNumeralCaps{5}: Satake diagram of type $A^{(1)}_{1,2r+2}$.};
		\matrix [column sep={0.6cm}, row sep={0.5 cm,between origins}, nodes={draw = none,  inner sep = 3pt}]
		{
			&\node(U1) [draw, circle, fill=white, scale=0.6, label = 1] {};
			&\node(U3) {$\cdots$};
			&\node(U4)[draw, circle, fill=white, scale=0.6, label =$r$] {};
			&\node(U5)[draw, circle, fill=white, scale=0.6, label =$r+1$] {};
			\\
			\node(L)[draw, circle, fill=white, scale=0.6, label =0] {};
			&&&&&
			\\
			&\node(L1) [draw, circle, fill=white, scale=0.6, label =below:$2r+2$] {};
			&\node(L3) {$\cdots$};
			&\node(L4)[draw, circle, fill=white, scale=0.6, label =below:$r+3$] {};
			&\node(L5)[draw, circle, fill=white, scale=0.6, label =below:$r+2$] {};
			\\
		};
		\begin{scope}
			\draw (L) -- node  {} (U1);
			\draw (U1) -- node  {} (U3);
			\draw (U3) -- node  {} (U4);
			\draw (U4) -- node  {} (U5);
			\draw (U5) -- node  {} (L5);
			\draw (L) -- node  {} (L1);
			\draw (L1) -- node  {} (L3);
			\draw (L3) -- node  {} (L4);
			\draw (L4) -- node  {} (L5);
			\draw (L) edge [color = blue, loop left, looseness=40, <->, shorten >=4pt, shorten <=4pt] node {} (L);
			\draw (L1) edge [color = blue,<->, bend right, shorten >=4pt, shorten <=4pt] node  {} (U1);
			\draw (L4) edge [color = blue,<->, bend left, shorten >=4pt, shorten <=4pt] node  {} (U4);
			\draw (L5) edge [color = blue,<->, bend left, shorten >=4pt, shorten <=4pt] node  {} (U5);
		\end{scope}
	\end{tikzpicture}
\end{center}
\begin{center}
	
	\begin{tikzpicture}	\label{figure:ii}
		\coordinate (A) at (0,1.5);
		\node at (A) {Diagram \RomanNumeralCaps{6}: Satake diagram of type $A^{(1)}_{2r+1,2}$.};
		\matrix [column sep={0.6cm}, row sep={0.5 cm,between origins}, nodes={draw = none,  inner sep = 3pt}]
		{
			&\node(U1) [draw, circle, fill=white, scale=0.6, label = 1] {};
			&\node(U2) {$\cdots$};
			&\node(U3)[draw, circle, fill=white, scale=0.6, label =$r$] {};
			\\
			\node(L)[draw, circle, fill=white, scale=0.6, label =0] {};
			&&&&
			\node(R)[draw, circle, fill=white, scale=0.6, label =$r+1$] {};
			\\
			&\node(L1) [draw, circle, fill=white, scale=0.6, label =below:$2r+1$] {};
			&\node(L2) {$\cdots$};
			&\node(L3)[draw, circle, fill=white, scale=0.6, label =below:$r+2$] {};
			\\
		};
		\begin{scope}
			\draw (L) -- node  {} (U1);
			\draw (U1) -- node  {} (U2);
			\draw (U2) -- node  {} (U3);
			\draw (U3) -- node  {} (R);
			\draw (L) -- node  {} (L1);
			\draw (L1) -- node  {} (L2);
			\draw (L2) -- node  {} (L3);
			\draw (L3) -- node  {} (R);
			\draw (L) edge [color = blue, loop left, looseness=40, <->, shorten >=4pt, shorten <=4pt] node {} (L);
			\draw (R) edge [color = blue,loop right, looseness=40, <->, shorten >=4pt, shorten <=4pt] node {} (R);
			\draw (L1) edge [color = blue,<->, bend right, shorten >=4pt, shorten <=4pt] node  {} (U1);
			\draw (L3) edge [color = blue,<->, bend left, shorten >=4pt, shorten <=4pt] node  {} (U3);
		\end{scope}
	\end{tikzpicture}
\end{center}

\begin{prop}{\label{iotaast}}
Let $\mathcal S$ be the Satake diagram in Diagram
 \RomanNumeralCaps{1}-\RomanNumeralCaps{6}. Then  there is a $\mathbb Q(q)$-algebra homomorphism $\iota:\mathbf{A}_q(\mathcal S)\rightarrow \mathbf A_q(A_{r+1})$ which sends
	\begin{align*}
		&\mathfrak d_i\mapsto\mathcal D_i(\sum_{k=0}^{\xi_i-1}{\mathcal M_i^{\xi_i-1-2k}}),\quad\mathfrak x_i\mapsto\mathcal X_i,\quad\mathfrak m_i^{\pm 1}\mapsto \mathcal M_i^{\pm \xi_i},\ \ \text{if $\xi_i>0$},\\
		&\mathfrak d_i\mapsto-\mathcal D_i(\sum_{k=0}^{-\xi_i-1}{\mathcal M_i^{-\xi_i-1-2k}}),\quad\mathfrak x_i\mapsto\mathcal X_i,\quad\mathfrak m_i^{\pm 1}\mapsto \mathcal M_i^{\pm \xi_i},\ \ \text{if  $\xi_i<0$}.\\
		\end{align*}
\end{prop}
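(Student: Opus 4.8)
The plan is to exploit that $\mathbf{A}_q(\mathcal S)$ is given by a presentation: a $\mathbb{Q}(q)$-algebra map out of it is determined by the images of $\mathfrak d_i,\mathfrak x_i,\mathfrak m_i$ and exists precisely when those images satisfy the defining relations \eqref{mimj}--\eqref{dixixidi}. So the whole task reduces to checking each relation after substituting the proposed images. As a preliminary remark, for every Satake diagram in Diagrams \RomanNumeralCaps{1}--\RomanNumeralCaps{6} one computes directly from $\xi_i=1-i\cdot\tau i$ that $\xi_i\in\{-1,1,2\}$ according to whether the $\tau$-orbit of $i$ is a loop ($\xi_i=-1$), a non-adjacent pair ($\xi_i=1$), or an adjacent pair ($\xi_i=2$). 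In particular $\xi_i\neq 0$, so the two cases $\xi_i>0$ and $\xi_i<0$ exhaust all possibilities and $\iota$ is unambiguously defined on generators.

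For the relations involving two distinct indices $i\neq j$ — the commutations in \eqref{mimj} together with the two unlabelled lines that follow it — I observe that $\iota$ sends every generator carrying the label $i$ to an operator built only from $\mathcal D_i,\mathcal X_i,\mathcal M_i$. Since operators with different subscripts commute in $\mathbf A_q(A_{r+1})$ by the distinct-index relations \eqref{MiMj}, all of these cross-index relations are inherited immediately, with no computation. Hence the substance lies entirely in the same-index relations \eqref{dimiximi} and \eqref{dixixidi}.

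For \eqref{dimiximi}, assume first $\xi_i>0$ and use the basic $q$-commutations of \eqref{DiMiXiMi} in the form $\mathcal D_i\mathcal M_i^{a}=q^{a}\mathcal M_i^{a}\mathcal D_i$ and $\mathcal X_i\mathcal M_i^{a}=q^{-a}\mathcal M_i^{a}\mathcal X_i$. Pushing $\iota(\mathfrak m_i)=\mathcal M_i^{\xi_i}$ past $\iota(\mathfrak d_i)=\mathcal D_i\sum_{k}\mathcal M_i^{\xi_i-1-2k}$ produces exactly the scalar $q^{\xi_i}$ (since the sum of $\mathcal M_i$-powers commutes with $\mathcal M_i^{\xi_i}$), and similarly the scalar $q^{-\xi_i}$ appears for $\mathfrak x_i$; this is a one-line direct check. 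The heart of the proof — and the step I expect to be the main obstacle — is \eqref{dixixidi}. Writing $\xi=\xi_i>0$ and moving $\mathcal X_i$ leftward through the $\mathcal M_i$-powers via $\mathcal X_i\mathcal M_i^{a}=q^{-a}\mathcal M_i^{a}\mathcal X_i$, then applying the number-operator relations $\mathcal D_i\mathcal X_i=\frac{q\mathcal M_i-q^{-1}\mathcal M_i^{-1}}{q-q^{-1}}$ and $\mathcal X_i\mathcal D_i=\frac{\mathcal M_i-\mathcal M_i^{-1}}{q-q^{-1}}$ from \eqref{DiXiXiDi}, I would reduce $\iota(\mathfrak d_i\mathfrak x_i)$ to
\[
\frac{q\mathcal M_i-q^{-1}\mathcal M_i^{-1}}{q-q^{-1}}\sum_{k=0}^{\xi-1}q^{\xi-1-2k}\mathcal M_i^{\xi-1-2k},
\]
and $\iota(\mathfrak x_i\mathfrak d_i)$ to $\frac{\mathcal M_i-\mathcal M_i^{-1}}{q-q^{-1}}\sum_{k=0}^{\xi-1}\mathcal M_i^{\xi-1-2k}$.

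The crucial point is then a telescoping identity: expanding the product of $(q\mathcal M_i-q^{-1}\mathcal M_i^{-1})$ (resp. $(\mathcal M_i-\mathcal M_i^{-1})$) against the symmetric sum, every intermediate power of $\mathcal M_i$ cancels and one is left with $q^{\xi}\mathcal M_i^{\xi}-q^{-\xi}\mathcal M_i^{-\xi}$ (resp. $\mathcal M_i^{\xi}-\mathcal M_i^{-\xi}$). Dividing by $q-q^{-1}$, this is exactly $\iota$ applied to the right-hand sides $\frac{q^{\xi_i}\mathfrak m_i-q^{-\xi_i}\mathfrak m_i^{-1}}{q-q^{-1}}$ and $\frac{\mathfrak m_i-\mathfrak m_i^{-1}}{q-q^{-1}}$, since $\iota(\mathfrak m_i)=\mathcal M_i^{\xi_i}$. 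Finally, for $\xi_i<0$ the extra sign in the definition of $\iota(\mathfrak d_i)$ combines with the same telescoping computation carried out for $-\xi_i>0$, and the symmetry $\mathcal M_i^{\xi_i}=\mathcal M_i^{-(-\xi_i)}$ reproduces the required relations verbatim; so this case follows from the identical argument up to that evident sign bookkeeping, and verifying all relations in both cases completes the proof that $\iota$ is a well-defined algebra homomorphism.
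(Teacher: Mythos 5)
Your proposal is correct and follows essentially the same route as the paper's proof: reduce to verifying the defining relations on the generators, dismiss the cross-index relations and \eqref{dimiximi} as immediate, and establish \eqref{dixixidi} by the same telescoping computation after commuting $\mathcal X_i$ through the powers of $\mathcal M_i$, with the $\xi_i<0$ case following by the evident sign bookkeeping. (Your side remark that $\xi_i\in\{-1,1,2\}$ overlooks the degenerate $A_1^{(1)}$ situation where the paper later takes $\xi_1=3$, but this is immaterial since your argument works verbatim for any nonzero $\xi_i$.)
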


\begin{proof}
Since $\mathfrak d_i$, $\mathfrak x_i$ and $\mathfrak m_i$ are generators of $\mathbf A_q(\mathcal S)$,
it's enough to show that the map $\iota$ is well defined.
The verification of relations \eqref{mimj}-\eqref{dimiximi} is straightforward.
 We shall only verify the relations in \eqref{dixixidi}. If $\xi_i>0$, by using the relations \eqref{DiMiXiMi} and \eqref{DiXiXiDi}, we have
\allowdisplaybreaks
\begin{align*}
	\iota(\mathfrak d_{i}\mathfrak x_{i})=&\mathcal D_i(\sum_{k=0}^{\xi_i-1}{\mathcal M_i^{\xi_i-1-2k}})\mathcal X_{i}=\mathcal D_i\mathcal X_{i}(\sum_{k=0}^{\xi_i-1}q^{\xi_i-1-2k}{\mathcal M_i^{\xi_i-1-2k}})\\
	=&\frac{q\mathcal M_i-q^{-1}\mathcal M_i^{-1}}{q-q^{-1}}(\sum_{k=0}^{\xi_i-1}q^{\xi_i-1-2k}{\mathcal M_i^{\xi_i-1-2k}})
	=\frac{q^{\xi_i}\mathcal M_i^{\xi_i}-q^{-\xi_i}\mathcal M_i^{-\xi_i}}{q-q^{-1}}\\
	=&\frac{q^{\xi_i}\iota(\mathfrak m_i)-q^{-\xi_i}\iota(\mathfrak m_i^{-1})}{q-q^{-1}},
	\end{align*}
and
\begin{align*}
    \iota(\mathfrak x_i\mathfrak d_i)=&\mathcal X_i\mathcal D_i(\sum_{k=0}^{\xi_i-1}{M_i^{\xi_i-1-2k}})=\frac{\mathcal M_i-\mathcal M_i^{-1}}{q-q^{-1}}(\sum_{k=0}^{\xi_i-1}{\mathcal M_i^{\xi_i-1-2k}})\\
	=&\frac{\sum_{k=0}^{\xi_i-1}\mathcal M_i^{\xi_i-2k}-\sum_{k=0}^{\xi_i-1}\mathcal M_i^{\xi_i-2-2k}}{q-q^{-1}}=\frac{\sum_{k=0}^{\xi_i-1}\mathcal M_i^{\xi_i-2k}-\sum_{k=1}^{\xi_i}\mathcal M_i^{\xi_i-2k}}{q-q^{-1}}\\
	=&\frac{\mathcal M_i^{\xi_i}-\mathcal M_i^{-\xi_i}}{q-q^{-1}}=\frac{\iota(\mathfrak m_i)-\iota(\mathfrak m_i^{-1})}{q-q^{-1}}.
\end{align*}

The case for $\xi_i<0$ can be shown similarly.
\end{proof}

\begin{rem}
The diagram constructed from Diagram \RomanNumeralCaps{1} by neglecting the edge
between $r+1$ and $r+2$ is called
 the Satake diagram of diagonal type. In this case, the modified $q$-Weyl algebra $\mathbf{A}_q(\mathcal S)$ is exactly the $q$-Weyl algebra $\mathbf A_q(A_{r+1})$.
\end{rem}

Let $\mathbf X=(X_0,\cdots,X_{r+1})$ and
$\mathbf a={(a_i)}_{i\in\mathbb I}\in\mathbb Z_{\ge 0}^{r+2}$.
We denote $ \mathbf X^{\mathbf a}=X_0^{a_0}X_1^{a_1}\cdots X_{r+1}^{a_{r+1}}$. Let $\mathbf e_i$ be the $(r+2)$-tuple such that the $i$-th entry is 1 and $0$ otherwise.
We define a lexicographic order $>_{lex}$ on the set  $\mathbb Z^{r+2}_{\ge 0}$ as follows.

For any $\mathbf a=(a_0,\cdots,a_{r+1}),~\mathbf b=(b_0,\cdots,b_{r+1})\in \mathbb Z^{r+2}_{\ge 0}$,  $\mathbf a>_{lex} \mathbf b$ if the leftmost nonzero entry of ${\mathbf a} -{\mathbf b}\in \mathbb Z^{r+2}$ is positive.

\begin{thm}\label{irreducible Aast module}
The polynomial ring $\mathbb P$ is a $\mathbf{A}_q(\mathcal S)$-module which is irreducible if $q$ is not a root of unity.
\end{thm}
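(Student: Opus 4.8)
The plan is to equip $\mathbb P$ with an $\mathbf A_q(\mathcal S)$-module structure by pulling back the natural $\mathbf A_q(A_{r+1})$-action along the homomorphism $\iota$ of Proposition \ref{iotaast}, i.e. setting $a\cdot f := \iota(a)f$ for $a\in\mathbf A_q(\mathcal S)$ and $f\in\mathbb P$; this gives the module structure for free. The subtlety is that $\iota$ is \emph{not} surjective---$\mathfrak d_i$ is sent to $\mathcal D_i$ composed with a nontrivial Laurent polynomial in $\mathcal M_i$, not to $\mathcal D_i$ itself---so the irreducibility of Theorem \ref{irreducible Aq module} does not transfer automatically. Instead I would verify directly that the image $\iota(\mathbf A_q(\mathcal S))$ already acts irreducibly on $\mathbb P$.

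First I would record the action of the generators on a monomial $\mathbf X^{\mathbf a}$. From $\mathcal X_i\mathbf X^{\mathbf a}=\mathbf X^{\mathbf a+\mathbf e_i}$, $\mathcal M_i\mathbf X^{\mathbf a}=q^{a_i}\mathbf X^{\mathbf a}$ and $\mathcal D_i\mathbf X^{\mathbf a}=[a_i]\mathbf X^{\mathbf a-\mathbf e_i}$ one reads off immediately $\mathfrak x_i\cdot\mathbf X^{\mathbf a}=\mathbf X^{\mathbf a+\mathbf e_i}$ and $\mathfrak m_i\cdot\mathbf X^{\mathbf a}=q^{\xi_i a_i}\mathbf X^{\mathbf a}$. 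The key computation is for $\mathfrak d_i$: the correction factor $\sum_k \mathcal M_i^{\pm\xi_i-1-2k}$ produces the geometric sum $[a_i]\sum_{k=0}^{|\xi_i|-1}q^{(|\xi_i|-1-2k)a_i}$, and since $\sum_{k=0}^{|\xi_i|-1}q^{(|\xi_i|-1-2k)a_i}=[\xi_i a_i]/[a_i]$, the same closed-form manipulation already used in the proof of Proposition \ref{iotaast} collapses this, in both cases $\xi_i>0$ and $\xi_i<0$, to the uniform formula $\mathfrak d_i\cdot\mathbf X^{\mathbf a}=[\xi_i a_i]\,\mathbf X^{\mathbf a-\mathbf e_i}$ (read as $0$ when $a_i=0$, since $[0]=0$).

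With these three formulas the irreducibility is a weight-space argument, and this is exactly where the hypothesis that $q$ is not a root of unity enters. Since $\xi_i=1-i\cdot\tau i\neq 0$ for every $i$, the joint $(\mathfrak m_i)_{i\in\mathbb I}$-eigenvalue $(q^{\xi_i a_i})_i$ of $\mathbf X^{\mathbf a}$ determines $\mathbf a$: the equality $q^{\xi_i(a_i-b_i)}=1$ forces $a_i=b_i$ because $q$ is not a root of unity. Thus the monomials are simultaneous eigenvectors with pairwise distinct eigenvalues. Given a nonzero $f=\sum_j c_j\mathbf X^{\mathbf a^{(j)}}$ in a submodule $M$, whenever $f$ has more than one term I would pick a coordinate $i$ separating two exponents and apply $\mathfrak m_i-q^{\xi_i b_i}$, which annihilates one term and scales another by a nonzero factor, producing an element of $M$ with strictly fewer monomials; iterating isolates a single monomial $\mathbf X^{\mathbf a}\in M$. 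Finally, since $[\xi_i a_i]\neq 0$ whenever $a_i\neq 0$ (again because $q$ is not a root of unity), repeated application of the $\mathfrak d_i$ lowers $\mathbf X^{\mathbf a}$ to the constant $1$ through nonzero scalars, and repeated application of the $\mathfrak x_i$ reaches every monomial $\mathbf X^{\mathbf b}$, so $M=\mathbb P$.

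The step I expect to be the main obstacle is the $\mathfrak d_i$-computation together with the nonvanishing of the scalar $[\xi_i a_i]$: it is precisely the non-surjectivity of $\iota$ (the correction factor in the image of $\mathfrak d_i$) that blocks a direct appeal to Theorem \ref{irreducible Aq module}, and it is precisely the nonvanishing of $[\xi_i a_i]$, guaranteed by $q$ not being a root of unity, that lets the lowering and raising operators connect all monomials. If $q$ were a root of unity, some $[\xi_i a_i]$ would vanish and the lowering chain would break, which accounts for the hypothesis in the statement.
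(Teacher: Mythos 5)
Your proposal is correct, and its skeleton matches the paper's: both pull back the $\mathbf A_q(A_{r+1})$-action along $\iota$, both derive the same explicit formulas $\mathfrak d_i\mathbf X^{\mathbf a}=[\xi_i a_i]\mathbf X^{\mathbf a-\mathbf e_i}$, $\mathfrak x_i\mathbf X^{\mathbf a}=\mathbf X^{\mathbf a+\mathbf e_i}$, $\mathfrak m_i\mathbf X^{\mathbf a}=q^{\xi_i a_i}\mathbf X^{\mathbf a}$, and both finish by showing any nonzero submodule contains a monomial, hence $\mathbf X^{\mathbf 0}$, hence everything. The one genuine divergence is the device for extracting a single monomial from a general element $w=\sum c_{\mathbf a}\mathbf X^{\mathbf a}$. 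You use the $\mathfrak m_i$-weight decomposition: since $\xi_i\neq 0$ and $q$ is not a root of unity, distinct monomials have distinct joint eigenvalues, and applying $\mathfrak m_i-q^{\xi_i b_i}$ strips off terms one at a time. The paper instead takes the lexicographically maximal exponent $\mathbf k$ with $c_{\mathbf k}\neq 0$ and applies $\mathfrak d_0^{k_0}\cdots\mathfrak d_{r+1}^{k_{r+1}}$ in one shot: every other term either has some $a_i<k_i$ (killed because $[0]=0$) or would dominate $\mathbf k$ coordinatewise, contradicting lex-maximality, so only $c_{\mathbf k}\prod_i[k_i]^{\xi_i}!\,\mathbf X^{\mathbf 0}$ survives. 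Your route is slightly longer but makes the role of the root-of-unity hypothesis more transparent (it enters both in separating eigenvalues and in the nonvanishing of $[\xi_i a_i]$), whereas the paper's lex trick is more compact but still silently needs $q$ not a root of unity for $\prod_i[k_i]^{\xi_i}!\neq 0$. Your observation that $\iota$ is not surjective and that the correction factor collapses to $[\xi_i a_i]/[a_i]$ is exactly the right point to flag; the paper states the resulting action formulas without spelling out that computation.
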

\begin{proof}
By Theorem \ref{irreducible Aq module} and Proposition \ref{iotaast}, $\mathbb P$ is a $\mathbf{A}_q(\mathcal S)$-module via pull-back. 
 More precisely, the generators $\mathfrak d_i$, $\mathfrak x_i$ and $\mathfrak m_i$  in $\mathbf{A}_q(\mathcal S)$ act on $\mathbb P$ as follows:
\begin{align}\label{A_j action}
	{\mathfrak d_i}\mathbf X^{\mathbf a}=[\xi_ia_i]\mathbf X^{\mathbf a-\mathbf {e}_i}
	,\quad{\mathfrak x_i}\mathbf X^{\mathbf a}=
	\mathbf X^{\mathbf a+\mathbf e_i}
	,\quad
	{\mathfrak m_i}\mathbf X^{\mathbf a}=
	q^{\xi_ia_i}\mathbf X^{\mathbf a},
\end{align}
where $\mathfrak d_i\mathbf X^{\mathbf a}=0$ if $a_i=0$.

Let $\mathbb P'$ be a nonzero submodule of $\mathbb P$. We choose a non-zero vector  $w=\sum_{\mathbf a}c_{\mathbf a}\mathbf X^{\mathbf a}\in\mathbb P'$ with $c_{\mathbf a}\in \mathbb Q(q)$. 
Let $\mathbf k=(k_i)_{0\leq i\leq r+1}$ be the maximum element in the set $\{\mathbf a\in\mathbb Z^{r+2}_{\geq 0}|c_{\mathbf a}\neq 0\}$ with respect to the order $>_{lex}$. It follows that
\begin{equation*}
\mathfrak d_0^{k_0}	\mathfrak d_1^{k_1}\cdots\mathfrak d_{r+1}^{k_{r+1}}w
=c_{\mathbf k}\prod_{i=0}^{r+1}{[k_i]}^{\xi_i}!\mathbf X^{\mathbf 0}.
\end{equation*}
Since the coefficient of $\mathbf X^{\mathbf 0}$ is not zero, we have $\mathbf X^{\mathbf 0}\in \mathbb P'$. For any element $v=\mathbf X^{\mathbf a}\in \mathbb P$ with $\mathbf a=(a_i)_{0\leq i\leq r+1}$, we have $\mathfrak x_0^{a_0}\mathfrak x_1^{a_1}\cdots \mathfrak x_{r+1}^{a_{r+1}}\mathbf X^{\mathbf 0}=v \in \mathbb P'$, i.e., $\mathbb P\subseteq \mathbb P'$. 
Hence $\mathbb P=\mathbb P'$.
Since $\mathbb P'$ is arbitary, $\mathbb P$ is irreducible.
\end{proof}

\section{Differential operator approach to $\imath$quantum groups}\label{Differential operator realization}

In this section, we shall show that there is an algebra homomorphism  from $^{\imath}U(\mathcal S)$ to $\mathbf A_q(\mathcal S)$ for a quasi-split Satake diagram $\mathcal S$.
In other words, this provides a differential operator approach to $^{\imath}U(\mathcal S)$.
Let us first recall the definition of $^{\imath}U(\mathcal S)$ from \cite{CLW18}.

\allowdisplaybreaks
\begin{Def}
Given a Satake diagram $S$, the associated $\imath$quantum group $^{\imath}U(\mathcal S)$ is the $\mathbb Q(q)$-algebra generated by $B_i$, $H_i$ ($i\in\mathbf I$), subjecting to the following relations 
\begin{align}
	&H_iH_{\tau i} =1,
	\quad
	H_i H_j=H_jH_i,   \label{relation1}
	\\
	&H_j B_i-q^{i\cdot\tau j-i\cdot j} B_iH_j =0, \label{relation2}
	\\
	&B_iB_{j}-B_jB_i =0, \quad \text{ if }i\cdot j =0 \text{ and }\tau i\neq j, \label{relation3}
	\\
	&\sum_{n=0}^{1-i\cdot j} (-1)^nB_i^{(n)}B_jB_i^{(1-i\cdot j-n)} =0, \quad \text{ if } j \neq \tau i\neq i, \label{relation4}
	\\
&\sum_{n=0}^{1-i\cdot\tau i} (-1)^{n+i\cdot\tau i} B_{ i}^{(n)}B_{\tau i}B_{ i}^{(1-i\cdot\tau i-n)}\label{relation5}\\
&\qquad\qquad\qquad=\frac{1}{q-q^{-1}}	      \left(q^{i\cdot\tau i+3(\delta_{i,0}-\delta_{i,\tau 0})\delta_{0\cdot\tau 0,-1}} (q^{-2};q^{-2})_{-i\cdot\tau i} \varsigma_{\tau i}B_{i}^{(-i\cdot\tau i)} H_i  \right.\notag\\
	&\qquad\qquad\qquad \left. -q^{3(\delta_{i,\tau 0}-\delta_{i, 0})\delta_{0\cdot\tau 0,-1}}(q^{2};q^{2})_{-i\cdot\tau i}\varsigma_{i}B_{ i}^{(-i\cdot\tau i)}   H_{\tau i} \right),
	\text{ if } \tau i \neq i,	
\notag \\
	&\sum_{n=0}^{1-i\cdot j} (-1)^n {\begin{bmatrix} 1-i\cdot j\\n \end{bmatrix}} B_i^{n}B_j B_i^{1-i\cdot j-n} =\delta_{i\cdot j,-1}q\varsigma_i B_j,\quad \text{ if }\tau i=i\neq j,
	\label{relation6}
\end{align}
where $\varsigma_i\in {\mathbb Q(q)}^\times$  satisfying $\varsigma_i=\varsigma_{\tau i}$ if $i\cdot\tau i=0$.
\end{Def}

For the Diagram \RomanNumeralCaps{1}-\RomanNumeralCaps{6}, let
\begin{equation*}
	(\varsigma_i, \varsigma_{\tau i})=
	\begin{cases}
		(q^{-1},q^{-1}), &\text{if $i\cdot\tau i=2$},\\
		(1,1), &\text{if $i\cdot\tau i=0$},\\
		(q,1), &\text{if $i\cdot\tau i=-1$},\\
		(q,-q^{-3}), &\text{if $i\cdot\tau i=-2$}.
	\end{cases}	
\end{equation*}


		For $i\in\mathbb I$, let
		\begin{equation*}
			f_i=B_i,\ e_i=B_{\tau i},\ k_i=H_{i},\ k_i^{-1}=H_{\tau i},\ \text{if}\ \tau i\neq i\ \text{and}\
			t_i=B_i,\ \text{if}\ \tau i= i.
		\end{equation*}

\begin{thm} \label {algebra homo}
Given a Satake diagram $S$ in Diagram \RomanNumeralCaps{1}-\RomanNumeralCaps{6},
there exists a $\mathbb Q(q)$-algebra homomorphism $\varPhi_{\mathcal S}:~^{\imath}U(\mathcal S) \rightarrow \mathbf{A}_q(\mathcal S)$.
\end{thm}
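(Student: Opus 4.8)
The plan is to define $\varPhi_{\mathcal S}$ explicitly on the generators $B_i,H_i$ and then to check that it respects the defining relations \eqref{relation1}--\eqref{relation6}. Taking Hayashi's homomorphism $\chi_r$ of Theorem~\ref{chi_r} as a template and using the relabelling $f_i=B_i,\ e_i=B_{\tau i},\ k_i=H_i,\ t_i=B_i$, I would send the generators attached to an \emph{ordinary} orbit $i$ (one with $\xi_i=1$) by the $\imath$-analogue of $\chi_r$, namely $B_i\mapsto\mathfrak x_{i+1}\mathfrak d_i$, $B_{\tau i}\mapsto\mathfrak x_i\mathfrak d_{i+1}$ and $H_i\mapsto\mathfrak m_i\mathfrak m_{i+1}^{-1}$, up to $\mathfrak m$-normalisations. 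At each \emph{distinguished} orbit $i_0$ lying at an end of the folded path --- the one or two orbits with $\xi_{i_0}\neq1$, which are either an adjacent $\tau$-pair ($i_0\cdot\tau i_0\in\{-1,-2\}$, so $\xi_{i_0}\in\{2,3\}$) or a $\tau$-fixed point ($i_0\cdot\tau i_0=2$, so $\xi_{i_0}=-1$) --- the corresponding generators $B_{i_0},B_{\tau i_0}$ (or the single $t_{i_0}$ in the fixed case) are sent to operators built from $\mathfrak x_{i_0},\mathfrak d_{i_0},\mathfrak m_{i_0}$ that couple $X_{i_0}$ to its neighbour and to itself, realising the rank-one $\imath$quantum group structure at the fold; the scalar coefficients are dictated by the tabulated values of $\varsigma_{i_0}$.

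Granting this definition, proving that $\varPhi_{\mathcal S}$ is a homomorphism reduces to verifying that the images satisfy \eqref{relation1}--\eqref{relation6} inside $\mathbf A_q(\mathcal S)$. Each check is a normal-ordering computation using \eqref{mimj}--\eqref{dixixidi}, and I would make it transparent by letting both sides act on a monomial $\mathbf X^{\mathbf a}$ through the explicit formulas \eqref{A_j action} and comparing the resulting scalars, which is legitimate because distinct normally ordered monomials in $\mathfrak x_i,\mathfrak d_i,\mathfrak m_i$ act by linearly independent operators on $\mathbb P$ when $q$ is not a root of unity. The Cartan relations \eqref{relation1} are immediate from \eqref{mimj}; the commutation relation \eqref{relation2} follows from \eqref{dimiximi}, the exponent $q^{i\cdot\tau j-i\cdot j}$ being produced exactly by the scalars $q^{\xi_i a_i}$ in \eqref{A_j action}; and \eqref{relation3} holds because, when $i\cdot j=0$ and $\tau i\neq j$, the two images act on disjoint variables. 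Moreover, for ordinary orbits $\xi_i=1$, so \eqref{dimiximi} and \eqref{dixixidi} collapse to \eqref{DiMiXiMi} and \eqref{DiXiXiDi}, and the Serre relation \eqref{relation4} restricted to such orbits is then verbatim the $q$-Serre computation already carried out for $\chi_r$ in Theorem~\ref{chi_r}.

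The main obstacle is the genuinely $\imath$-theoretic content: the inhomogeneous relations \eqref{relation5} and \eqref{relation6}, together with the instances of \eqref{relation4} in which one index is a fold orbit. These are the only relations that see the self-coupling at a fold, and their right-hand sides carry $H_i$, the $q$-shifted factorials $(q^{\pm2};q^{\pm2})_{-i\cdot\tau i}$, the signs $(-1)^{n+i\cdot\tau i}$ and the parameters $\varsigma_i$. For each I would expand the divided powers $B_i^{(n)}$, substitute the operator images, normally order using \eqref{dimiximi}--\eqref{dixixidi}, and collect the result as a single operator on $\mathbf X^{\mathbf a}$; matching against the right-hand side then reduces to a finite family of identities among $q$-integers and $q$-binomial coefficients. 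The crux --- and the delicate bookkeeping --- is tracking the $q$-powers generated when $\mathfrak m_{i_0}^{\pm1}$ is pushed across the $\mathfrak x_{i_0},\mathfrak d_{i_0}$ factors where $\xi_{i_0}\neq1$, and the point of the proof is that the tabulated $\varsigma_{i_0}$ (with the $\delta$-exponents in \eqref{relation5}) are precisely the scalars that make these identities hold. I would organise the computation so that the fixed-point folds ($\tau i=i$, governed by \eqref{relation6}) and the adjacent-pair folds ($\tau i\neq i$, governed by \eqref{relation5}) are handled by one normal-ordering argument that differs only through the value of $\xi_{i_0}$ and the corresponding $\varsigma_{i_0}$.
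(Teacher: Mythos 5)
Your overall strategy --- define $\varPhi_{\mathcal S}$ on generators by mimicking Hayashi's $\chi_r$ away from the fold and then check relations \eqref{relation1}--\eqref{relation6} case by case --- is exactly the paper's, and your remarks about \eqref{relation1}--\eqref{relation4} on ordinary orbits are correct. But two things keep this from being a proof. First, you never actually write down the map at the fold, and your guess about where the data lives there is off: in the paper the images of $e_i,f_i,t_i$ carry \emph{no} $\varsigma$-dependent scalars and keep the plain Hayashi shape ($e_i\mapsto\mathfrak x_i\mathfrak d_{i+1}$, $f_i\mapsto\mathfrak x_{i+1}\mathfrak d_i$, $t_{r+1}\mapsto\mathfrak x_{r+1}\mathfrak d_{r+1}$, etc.); all of the fold information is absorbed into the modified relations \eqref{dimiximi}--\eqref{dixixidi} through $\xi_i=1-i\cdot\tau i$, and the only adjustments are signs and $q$-powers on the images of the $k_i$ (e.g.\ $k_i\mapsto(-1)^{\delta_{i,r}}\mathfrak m_i\mathfrak m_{i+1}^{-(-1)^{\delta_{i,r}}}$ in Diagram II, $k_0\mapsto q^{-2}\mathfrak m_0\mathfrak m_1^{-1}$ in Diagram III). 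Those signs and powers are precisely what make \eqref{relation2}, \eqref{relation5} and \eqref{relation6} come out with the tabulated $\varsigma_i$; finding them is the content of the theorem, and the verifications of \eqref{relation5}--\eqref{relation6} that you defer to ``a finite family of identities among $q$-integers'' are where the paper does essentially all of its work (the computations for $e_r^2f_r+f_re_r^2$, for $t_{r+1}^2e_r+e_rt_{r+1}^2$, and for the cubic relation in the $A_1^{(1)}$ case, which even requires resetting $\xi_0=1$, $\xi_1=3$ by hand rather than using the general formula).

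Second, your proposed verification method --- act on monomials $\mathbf X^{\mathbf a}$ via \eqref{A_j action} and invoke linear independence of normally ordered monomials as operators on $\mathbb P$ --- differs from the paper, which normal-orders symbolically inside $\mathbf A_q(\mathcal S)$ using only \eqref{mimj}--\eqref{dixixidi}, and as stated your independence claim is false: by \eqref{dixixidi}, $\mathfrak x_i\mathfrak d_i=(\mathfrak m_i-\mathfrak m_i^{-1})/(q-q^{-1})$, so normally ordered monomials containing both $\mathfrak x_i$ and $\mathfrak d_i$ for the same $i$ are linearly dependent on pure $\mathfrak m$-monomials. You would need to restrict to monomials $\mathfrak x^{\mathbf a}\mathfrak m^{\mathbf b}\mathfrak d^{\mathbf c}$ with $a_ic_i=0$ for all $i$ and then prove the independence (which does hold, since every $\xi_i\neq0$); without that supplementary lemma your argument only produces a homomorphism into $\mathrm{End}(\mathbb P)$, which is weaker than the map into $\mathbf A_q(\mathcal S)$ asserted by the theorem.
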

\begin{proof}
For each case of $S$, we shall give an explicit construction of $\varPhi_{\mathcal S}$, and show that it is an algebra homomorphism.
\begin{enumerate}
\item
If $S$ is $A_{2r+1}$ ($r\ge0$) in Diagram I, the map $\varPhi_{\mathcal S}$ is given by
	\begin{equation*}
	e_i\mapsto\mathfrak{x}_{i}\mathfrak{d}_{i+1},\quad
	f_i\mapsto\mathfrak{x}_{i+1}\mathfrak{d}_{i},\quad
	k_i\mapsto \mathfrak{m}_{i}\mathfrak{m}_{i+1}^{-1}.
\end{equation*}	
\item \label {algebra homo 2}
If $S$ is $A_{2r+2,1}$ ($r\ge0$) in Diagram II, the map $\varPhi_{\mathcal S}$ is given by
	\begin{equation*}
	e_i\mapsto\mathfrak{x}_{i}\mathfrak{d}_{i+1},\quad
	f_i\mapsto\mathfrak{x}_{i+1}\mathfrak{d}_{i},\quad
	k_i\mapsto {(-1)}^{\delta_{i,r}}\mathfrak{m}_{i}\mathfrak{m}_{i+1}^{{-(-1)}^{\delta_{i,r}}},\quad 	t_{r+1}\mapsto
	\mathfrak{x}_{r+1}\mathfrak{d}_{r+1}.
\end{equation*}	
\item
If $S$ is $A_{2r+1}^{(1)}$ ($r\ge1$) in Diagram III, the map $\varPhi_{\mathcal S}$ is given by
	\begin{equation*}
	e_i\mapsto\mathfrak{x}_{i}\mathfrak{d}_{i+1},\quad
	f_i\mapsto\mathfrak{x}_{i+1}\mathfrak{d}_{i},\quad
	k_i\mapsto q^{-2\delta_{i,0}}\mathfrak{m}_{i}\mathfrak{m}_{i+1}^{-1}.
\end{equation*}	
\item
If $S$ is $A_{1}^{(1)}$ in Diagram III, the map $\varPhi_{\mathcal S}$ is given by
	\begin{equation*}
	e_0\mapsto\mathfrak{x}_{0}\mathfrak{d}_{1},\quad
	f_0\mapsto\mathfrak{x}_{1}\mathfrak{d}_{0},\quad
	k_0\mapsto q^{-1}\mathfrak{m}_{0}\mathfrak{m}^{-1}_{1}.
\end{equation*}
\item
If $S$ is $A_{2r+2,1}^{(1)}$ ($r\ge0$) in Diagram IV, the map $\varPhi_{\mathcal S}$ is given by
	\begin{align*}
	e_i\mapsto\mathfrak{x}_{i}\mathfrak{d}_{i+1},\quad
	f_i\mapsto\mathfrak{x}_{i+1}\mathfrak{d}_{i},\quad
	k_i\mapsto {(-1)}^{\delta_{i,r}}q^{-2\delta_{i,0}}\mathfrak{m}_{i}\mathfrak{m}^{{-(-1)}^{\delta_{i,r}}}_{i+1},\quad
	t_{r+1}\mapsto \mathfrak x_{r+1}\mathfrak d_{r+1}.
\end{align*}	
\item
If $S$ is $A_{1,2r+2}^{(1)}$ ($r\ge0$) in Diagram V,
the map $\varPhi_{\mathcal S}$ is given by
	\begin{equation*}
	e_i\mapsto\mathfrak{x}_{i-1}\mathfrak{d}_{i},\quad
	f_i\mapsto\mathfrak{x}_{i}\mathfrak{d}_{i-1},\quad
	k_i\mapsto {(-1)}^{\delta_{i,1}}\mathfrak{m}_{i-1}^{{(-1)}^{\delta_{i,1}}}\mathfrak{m}^{-1}_{i},\quad
	t_{0}\mapsto \mathfrak x_{0}\mathfrak d_{0}.
\end{equation*}
\item
If $S$ is $A_{2r+1,2}^{(1)}$ ($r\ge1$) in Diagram VI, the map $\varPhi_{\mathcal S}$ is given by
\begin{equation*}
	e_i\mapsto\mathfrak{x}_{i}\mathfrak{d}_{i+1},\quad
	f_i\mapsto\mathfrak{x}_{i+1}\mathfrak{d}_{i},\quad
	k_i\mapsto {(-1)}^{\delta_{i,r}}\mathfrak{m}_{i}\mathfrak{m}^{-{(-1)}^{\delta_{i,r}}}_{i+1},\quad
	t_{0}\mapsto\mathfrak x_1\mathfrak d_1,\quad
	t_{r+1}\mapsto\mathfrak x_{r+1}\mathfrak d_{r+1}.
\end{equation*}		
\end{enumerate}
We shall only prove the cases (1)-(4), the other cases can be shown similarly.

(1) By the relations \eqref{relation2} and \eqref{relation5}, we have
	\begin{align}
		&k_ie_jk_i^{-1}= q^{\texttt{c}_{ij}+\delta_{i,r}\delta_{j,r}} e_j,\quad k_if_jk_i^{-1}= q^{-\texttt{c}_{ij}-\delta_{i,r}\delta_{j,r}} f_j,\label{kek}\\
		& e_{r}^2f_{r} + f_{r}e_{r}^2
		=  (q+q^{-1}) (e_{r}f_{r}e_{r}-e_{r}( qk_{r}+q^{-1}k^{-1}_{r} )),\label{j-serre relation}\\
		&f_{r}^2e_{r} + e_{r}f_{r}^2
		=  (q+q^{-1}) (f_{r}e_{r}f_{r}-( qk_{r}+q^{-1}k^{-1}_{r} )f_{r}).	
	\end{align}
	
	By the involution $\tau$ in Diagram \RomanNumeralCaps{1}, it follows that $\xi_i=2^{\delta_{i,r+1}}$. According to the definition of homomorphism $\varPhi_{\mathcal S}$ and relations \eqref{mimj}-\eqref{dixixidi}, for $i=j= r$ in the relations \eqref{kek}, we have
	\begin{align*}
		&{\varPhi_{\mathcal S}}(k_re_rk_r^{-1})=\mathfrak{m}_{r}\mathfrak{m}_{r+1}^{-1}\mathfrak{x}_{r}\mathfrak{d}_{r+1}\mathfrak{m}_{r}^{-1}\mathfrak{m}_{r+1}=q^3\mathfrak{x}_{r}\mathfrak{d}_{r+1}=q^3{\varPhi_{\mathcal S}}(e_r),\\
		&{\varPhi_{\mathcal S}}(k_rf_rk_r^{-1})=\mathfrak{m}_{r}\mathfrak{m}_{r+1}^{-1}\mathfrak{x}_{r+1}\mathfrak{d}_{r}\mathfrak{m}_{r}^{-1}\mathfrak{m}_{r+1}=q^{-3}\mathfrak{x}_{r+1}\mathfrak{d}_{r}=q^{-3}{\varPhi_{\mathcal S}}(f_r),
	\end{align*}
and the verification of other cases in the relations \eqref{kek} is similar.
	
For the relation \eqref{j-serre relation}, we have
\begin{align*}
	&{\varPhi_{\mathcal S}(e_r^{2}f_r)}+	{\varPhi_{\mathcal S}(f_re_r^{2})}
	 =\mathfrak{x}_{r}\mathfrak{d}_{r+1}\mathfrak{x}_{r}\mathfrak{d}_{r+1}\mathfrak{x}_{r+1}\mathfrak{d}_{r}+\mathfrak{x}_{r+1}\mathfrak{d}_{r}\mathfrak{x}_{r}\mathfrak{d}_{r+1}\mathfrak{x}_{r}\mathfrak{d}_{r+1}\\
	 =&{(q-q^{-1})}^{-2}\mathfrak{x}_{r}\mathfrak{d}_{r+1}((\mathfrak{m}_{r}-\mathfrak{m}_{r}^{-1})(q^2\mathfrak{m}_{r+1}-q^{-2}\mathfrak{m}_{r+1}^{-1})
	+(q^2\mathfrak{m}_{r}-q^{-2}\mathfrak{m}_{r}^{-1})(q^{-2}\mathfrak{m}_{r+1}-q^2\mathfrak{m}_{r+1}^{-1}))\\
	 =&{(q-q^{-1})}^{-2}\mathfrak{x}_{r}\mathfrak{d}_{r+1}((q^2+1)\mathfrak{m}_{r}\mathfrak{m}_{r+1}+(q^{-2}+1)\mathfrak{m}_{r}^{-1}\mathfrak{m}^{-1}_{r+1}\\
	&\qquad\qquad\qquad\qquad-(q^{-2}+q^4)\mathfrak{m}_{r}\mathfrak{m}_{r+1}^{-1}-(q^{2}+q^{-4})\mathfrak{m}_r\mathfrak{m}^{-1}_{r+1})\\
	=&(q+q^{-1}){(q-q^{-1})}^{-2}\mathfrak{x}_{r}\mathfrak{d}_{r+1}(q\mathfrak m_r\mathfrak m_{r+1}+q^{-1}\mathfrak m_r^{-1}\mathfrak m_{r+1}^{-1}\\
	&\qquad\qquad\qquad\qquad-(q+q(q-q^{-1})^2)\mathfrak m_r\mathfrak m_{r+1}^{-1}-(q^{-1}+q^{-1}(q-q^{-1})^2)\mathfrak m_r^{-1}\mathfrak m_{r+1})\\
	=&(q+q^{-1})\mathfrak{x}_{r}\mathfrak{d}_{r+1}({(q-q^{-1})}^{-2}(\mathfrak m_{r+1}-\mathfrak m_{r+1}^{-1})(q\mathfrak m_r-q^{-1}\mathfrak m_r^{-1})-q\mathfrak m_{r}\mathfrak m_{r+1}^{-1}-q^{-1}\mathfrak m_{r}^{-1}\mathfrak m_{r+1})\\
	 =&(q+q^{-1})(\mathfrak{x}_{r}\mathfrak{d}_{r+1}\mathfrak{x}_{r+1}\mathfrak{d}_{r}\mathfrak{x}_{r}\mathfrak{d}_{r+1}-q\mathfrak{x}_{r}\mathfrak{d}_{r+1}\mathfrak m_{r}\mathfrak m_{r+1}^{-1}-q^{-1}\mathfrak{x}_{r}\mathfrak{d}_{r+1}\mathfrak m_{r}^{-1}\mathfrak m_{r+1})\\
	=&(q+q^{-1})(\varPhi_{\mathcal S}(e_rf_re_r)-q\varPhi_{\mathcal S}(e_rk_r)-q^{-1}\varPhi_{\mathcal S}(e_rk_r^{-1})).\\
\end{align*}
(2) For $i,j\in\{r,r+1\}$, the  relations \eqref{relation4} and \eqref{relation6} can be converted to
\begin{align}
	&e_{r}^2 t_{r+1} + t_{r+1} e_{r}^2
	= [2]  e_{r} t_{r+1} e_{r},\label{e2t}\\
	&f_{r}^2 t_{r+1} + t_{r+1} f_{r}^2
	= [2] f_{r} t_{r+1} f_{r}\label{f2t}\\
	&e_i t_{r+1} =t_{r+1} e_i,\quad f_i t_{r+1} = t_{r+1} f_i, \ \ \text{if $i\neq r$},\\
	&t_{r+1}^2 e_{r} + e_{r} t_{r+1}^2 = [2]  t_{r+1} e_{r} t_{r+1} + e_{r},\label{t2e} \\
	&t_{r+1}^2 f_{r} + f_{r} t_{r+1}^2 = [2]t_{r+1} f_{r} t_{r+1} + f_{r},\label{t2f}.
\end{align}

By the involution $\tau$ in Diagram \RomanNumeralCaps{2}, we have  $\xi_i={(-1)}^{\delta_{i,r+1}}$. For the relations \eqref{e2t} and \eqref{t2e}, we have
\begin{align*}
	&\varPhi_{\mathcal S}(e_{r}^2t_{r+1})+\varPhi_{\mathcal S}( t_{r+1}e_{r}^2)\\
	=&\mathfrak x_r^2\mathfrak d_{r+1}^2\frac{\mathfrak m_{r+1}-\mathfrak m_{r+1}^{-1}}{q-q^{-1}}
	+\mathfrak x_r^2\mathfrak d_{r+1}^2
	\frac{q^2\mathfrak m_{r+1}-q^{-2}\mathfrak m_{r+1}^{-1}}{q-q^{-1}}\\
	=&(q+q^{-1})\mathfrak x_r^2\mathfrak d_{r+1}^2\frac{q\mathfrak m_{r+1}-q^{-1}\mathfrak m_{r+1}^{-1}}{q-q^{-1}}
	=[2]\varPhi_{\mathcal S}(e_{r}t_{r+1}e_{r}),\\
	&\varPhi_{\mathcal S}(t_{r+1}^2 e_{r})+
	\varPhi_{\mathcal S}( e_{r}t_{r+1}^2)\\
	=&{(q-q^{-1})}^{-2}({(\mathfrak m_{r+1}-\mathfrak m_{r+1}^{-1})}^2
	+
	{(q^{-1}\mathfrak m_{r+1}-q\mathfrak m^{-1}_{r+1})}^2)\mathfrak x_r\mathfrak d_{r+1}\\
	=&{(q-q^{-1})}^{-2}((1+q^{-2})\mathfrak m^{2}_{r+1}+(1+q^2)\mathfrak m^{-2}_{r+1}-4)\mathfrak x_r\mathfrak d_{r+1}\\
	=&{(q-q^{-1})}^{-2}(q+q^{-1})(\mathfrak m_{r+1}-\mathfrak m^{-1}_{r+1} )(q^{-1}\mathfrak m_{r+1}-q\mathfrak m^{-1}_{r+1} )\mathfrak x_r\mathfrak d_{r+1}+\mathfrak x_r\mathfrak d_{r+1}\\
	=&[2]\varPhi_{\mathcal S}(t_{r+1}e_rt_{r+1})+\varPhi_{\mathcal S}(e_r).	
\end{align*}

(3)
By the relation \eqref{relation5}, we have
\begin{align}
	&  e_i   f_j -   f_j   e_i  = \delta_{ij} \frac{  k_i -   k_i^{-1}}{q - q^{-1}}, \ \ \text{if $(i, j) \neq (0,0), (r,r)$},\label{effe-0r}\\
	&  e_0^2   f_0 +   f_0   e_0^2
	= (q + q^{-1})(   e_0   f_0   e_0 - (q   k_0 + q^{-1}   k_0^{-1})   e_0),\label{e02f0}\\
	&  e_r^2   f_r +   f_r   e_r^2
	= (q + q^{-1})(   e_r   f_r   e_r -   e_r (q   k_r + q^{-1}   k_r^{-1})),\\
	&  f_0^2   e_0   +   e_0   f_0^2
	= (q + q^{-1})  (   f_0   e_0   f_0 -   f_0 (q   k_0 + q^{-1}   k_0^{-1})),\\
	& f_r^2   e_r   +   e_r   f_r^2
	= (q + q^{-1})  (   f_r   e_r   f_r - (q   k_r + q^{-1}   k_r^{-1})  f_r)\label{fr2er}.
\end{align}

We provide a detailed argument for the relation \eqref{e02f0}.
By the involution $\tau$ in Diagram \RomanNumeralCaps{3}, we have $\xi_i={2}^{\delta_{i,0}+\delta_{i,r+1}}$. Then
\begin{align*}	
	&{\varPhi_{\mathcal S}(e_0^{2}f_0)}+	{\varPhi_{\mathcal S}(f_0e_0^{2})}		
	=\mathfrak{x}_{0}\mathfrak{d}_{1}\mathfrak{x}_{0}\mathfrak{d}_{1}\mathfrak{x}_{1}\mathfrak{d}_{0}
	+\mathfrak{x}_{1}\mathfrak{d}_{0}\mathfrak{x}_{0}\mathfrak{d}_{1}\mathfrak{x}_{0}\mathfrak{d}_{1}	\\
	=&{(q-q^{-1})}^{-2}\mathfrak{x}_{0}\mathfrak{d}_{1}((\mathfrak{m}_{0}-\mathfrak{m}_{0}^{-1})(q\mathfrak{m}_{1}-q^{-1}\mathfrak{m}_{1}^{-1})
	+(q^{-1}\mathfrak{m}_{1}-q\mathfrak{m}_{1}^{-1})(q^4\mathfrak{m}_{0}-q^{-4}\mathfrak{m}_{0}^{-1}))\\	
	=&{(q-q^{-1})}^{-2}\mathfrak{x}_{0}\mathfrak{d}_{1}((q+q^3)\mathfrak{m}_{0}\mathfrak{m}_{1}-(q^{-1}+q^5)\mathfrak{m}_{0}\mathfrak{m}^{-1}_{1}
	-(q+q^{-5})\mathfrak{m}_{0}^{-1}\mathfrak{m}_{1}+(q^{-1}+q^{-3})\mathfrak{m}_0^{-1}\mathfrak{m}^{-1}_{1})\\
	 =&{(q-q^{-1})}^{-2}(q+q^{-1})\mathfrak{x}_{0}\mathfrak{d}_{1}(q^2\mathfrak{m}_{0}\mathfrak{m}_{1}+q^{-2}\mathfrak{m}^{-1}_{0}\mathfrak{m}^{-1}_{1}\\
	 &\qquad\qquad\qquad\qquad-(q^2+{(q-q^{-1})}^2q^2)\mathfrak{m}_{0}\mathfrak{m}_{1}^{-1}-(q^{-2}+{(q-q^{-1})}^2q^{-2})\mathfrak{m}_{0}^{-1}\mathfrak{m}_{1})\\
	=&(q+q^{-1})\mathfrak{x}_{0}\mathfrak{d}_{1}({(q-q^{-1})}^{-2}(\mathfrak m_1-\mathfrak m_1^{-1})(q^{2}\mathfrak m_0-q^{-2}\mathfrak m_0^{-1})-(q^2\mathfrak m_0\mathfrak m_1^{-1}+q^{-2}\mathfrak m_0^{-1}\mathfrak m_1))\\
	 =&(q+q^{-1})(\mathfrak{x}_{0}\mathfrak{d}_{1}\mathfrak{x}_{1}\mathfrak{d}_{1}\mathfrak{x}_{0}\mathfrak{d}_{1}-q^2\mathfrak{x}_{0}\mathfrak{d}_{1}\mathfrak{m}_{0}\mathfrak{m}_{1}^{-1}-q^{-2}\mathfrak{x}_{0}\mathfrak{d}_{1}\mathfrak{m}_{0}^{-1}\mathfrak{m}_{1})\\
	=&(q+q^{-1})(	{\varPhi_{\mathcal S}(e_0f_0e_0)}-q	{\varPhi_{\mathcal S}(k_0e_0)}-q^{-1}{\varPhi_{\mathcal S}(k_0^{-1}e_0)}).
\end{align*}

(4)
It follows by the relations \eqref{relation1}, \eqref{relation2} and \eqref{relation5} that
\begin{align}
	&k_0k_0^{-1} =1, \quad \quad   k_0e_0=q^4e_0k_0,  \qquad
	k_0f_0 = q^{-4}f_0k_0,\\
	&e^{3}_0f_0 - [3]e^2_0f_0e_0 + [3]e_0f_0e^{2}_0 -  f_0e^3_0 = [3]! (q-q^{-1})e_0(k_0 - k^{-1}_0)e_0,\label{U_2^c} \\
	&f^{3}_0e_0 - [3]f^2_0e_0f_0 +[3]f_0e_0f^{2}_0 -e_0f^3_0 = -[3]!(q-q^{-1})f_0(k_0-k^{-1}_0)f_0.
\end{align}

We need to make slight modifications for the algebra $\mathbf{A}_q(\mathcal S)$ defined in Definition \ref{modified q-Weyl algebra}.
Let $r=0$ and $\xi_0=1$, $\xi_1=3$. We only verify the relation \eqref{U_2^c} and
we have
\begin{align*}
	&\varPhi_{\mathcal S}(e_0^{3}f_0)-[3]\varPhi_{\mathcal S}(e_0^{2}f_0e_0)+[3]\varPhi_{\mathcal S}(e_0f_0e_0^{2})-
	\varPhi_{\mathcal S}(f_0e_0^{3})\\
	=&\mathfrak x_{0}\mathfrak d_{1}\mathfrak x_{0}\mathfrak d_{1}\mathfrak x_{0}\mathfrak d_{1}\mathfrak x_{1}\mathfrak d_{0}-[3]\mathfrak x_{0}\mathfrak d_{1}\mathfrak x_{0}\mathfrak d_{1}\mathfrak x_{1}\mathfrak d_{0}\mathfrak x_{0}\mathfrak d_{1}+[3]	\mathfrak x_{0}\mathfrak d_{1}\mathfrak x_{1}\mathfrak d_{0}\mathfrak x_{0}\mathfrak d_{1}\mathfrak x_{0}\mathfrak d_{1}-\mathfrak x_{1}\mathfrak d_{0}\mathfrak x_{0}\mathfrak d_{1}\mathfrak x_{0}\mathfrak d_{1}\mathfrak x_{0}\mathfrak d_{1}\\
	=&{(q-q^{-1})}^{-2}\mathfrak x^2_{0}\mathfrak d^2_{1}((\mathfrak m_0-\mathfrak m_0^{-1})(q^3\mathfrak m_1-q^{-3}\mathfrak m_1^{-1})
	-[3](\mathfrak m_1-\mathfrak m_1^{-1})(q\mathfrak m_0-q^{-1}\mathfrak m_0^{-1}) \\
	&\qquad\qquad+[3](q^{2}\mathfrak m_0-q^{-2}\mathfrak m_0^{-1})(q^{-3}\mathfrak m_1-q^{3}\mathfrak m_1^{-1})
	-(q^{-6}\mathfrak m_1-q^{6}\mathfrak m_1^{-1})(q^{3}\mathfrak m_0-q^{-3}\mathfrak m_0^{-1}))\\
	=&{(q-q^{-1})}^{-2}\mathfrak x^2_{0}\mathfrak d^2_{1}
	(((q^3-q^{-3})+[3](q^{-1}-q))\mathfrak {m}_0\mathfrak {m}_1-((q^{-3}-q^{9})+[3](q^{5}-q))\mathfrak {m}_0\mathfrak {m}_1^{-1}\\
	&\qquad\qquad-((q^{3}-q^{-9})+[3](q^{-5}-q^{-1}))\mathfrak {m}_1\mathfrak {m}_0^{-1}-((q^{-3}-q^{3})+[3](q-q^{-1}))\mathfrak {m}_0^{-1}\mathfrak {m}_1^{-1})\\
	=&[3]!(q-q^{-1})\mathfrak x^2_{0}\mathfrak d^2_{1}(q^3\mathfrak m_0\mathfrak m_1^{-1}-q^{-3}\mathfrak m^{-1}_0\mathfrak m_1)	
	=[3]!(q-q^{-1})\mathfrak x_{0}\mathfrak d_{1}(q^{-1}\mathfrak m_0\mathfrak m_1^{-1}-q\mathfrak m^{-1}_0\mathfrak m_1)\mathfrak x_{0}\mathfrak d_{1}\\
	=&[3]!(q-q^{-1})
	\varPhi_{\mathcal S}(e_0(k_0-k_0^{-1})e_0).
\end{align*}

The other relations in $^{\imath}{U(\mathcal S)}$ can be verified in a similar way.		
\end{proof}

\section{Crystal basis of the oscillator representations}\label{Crystal basis}

In this section, we study the  irreducible modules of $\imath$quantum groups and construct the crystal basis of these irreducible modules over $\imath$quantum groups $^{\imath}U(A_{2r+1})$, $^{\imath} U(A_{2r+1}^{(1)})$ and $^{\imath} U(A_{1}^{(1)})$.

\subsection{Irreducible modules of $\imath$quantum groups}

 By Proposition \ref{iotaast} and Theorem  \ref{algebra homo}, we have

\begin{cor}
	There exists a $\mathbb Q(q)$-algebra homomorphism $\iota\circ\varPhi_{\mathcal S}:  ~^{\imath}U(\mathcal S)\to\mathbf A_q(A_{r+1})$.
\end{cor}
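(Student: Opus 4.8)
The plan is purely formal. The corollary only asserts the existence of one $\mathbb Q(q)$-algebra homomorphism ${}^{\imath}U(\mathcal S)\to\mathbf A_q(A_{r+1})$, and two homomorphisms with matching source and target are already in hand: by Theorem~\ref{algebra homo} the map $\varPhi_{\mathcal S}\colon{}^{\imath}U(\mathcal S)\to\mathbf A_q(\mathcal S)$ is a $\mathbb Q(q)$-algebra homomorphism, and by Proposition~\ref{iotaast} the map $\iota\colon\mathbf A_q(\mathcal S)\to\mathbf A_q(A_{r+1})$ is a $\mathbb Q(q)$-algebra homomorphism. First I would invoke the elementary fact that a composite of $\mathbb Q(q)$-algebra homomorphisms is again a $\mathbb Q(q)$-algebra homomorphism: it is $\mathbb Q(q)$-linear and multiplicative because each factor is, and it sends $1$ to $1$. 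Applying this to the pair gives at once the desired map $\iota\circ\varPhi_{\mathcal S}\colon{}^{\imath}U(\mathcal S)\to\mathbf A_q(A_{r+1})$.

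The one thing genuinely worth recording is the explicit action of the composite on the generators, since this is what realizes ${}^{\imath}U(\mathcal S)$ concretely by $q$-difference operators on $\mathbb P$. Here I would simply substitute the images listed in Theorem~\ref{algebra homo} into the rules of Proposition~\ref{iotaast}: every $\mathfrak x_i$ goes to $\mathcal X_i$, every $\mathfrak m_i^{\pm1}$ to $\mathcal M_i^{\pm\xi_i}$, and every $\mathfrak d_i$ to the signed sum $\mathcal D_i\bigl(\sum_k\mathcal M_i^{\xi_i-1-2k}\bigr)$ (or its negative, according to the sign of $\xi_i$) prescribed there. For example, in Diagram~\RomanNumeralCaps{1} one reads off $e_i\mapsto\mathcal X_i\,\iota(\mathfrak d_{i+1})$, $f_i\mapsto\mathcal X_{i+1}\,\iota(\mathfrak d_i)$, and $k_i\mapsto\mathcal M_i^{\xi_i}\mathcal M_{i+1}^{-\xi_{i+1}}$, with the analogous substitutions (carrying along the extra scalar prefactors such as $(-1)^{\delta_{i,r}}$ and $q^{-2\delta_{i,0}}$) in the remaining diagrams.

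There is no real obstacle. Both constituent maps have already been verified against their defining relations, so the composite needs no further relation-checking; the associativity of map composition does all the work. The only care required is the bookkeeping of the exponents $\xi_i$ and of the signs attached to the $\mathfrak d_i$-substitution when $\xi_i$ changes from node to node, but this is entirely mechanical.
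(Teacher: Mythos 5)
Your proposal is correct and coincides with the paper's own argument: the corollary is stated there with no proof beyond the citation of Proposition~\ref{iotaast} and Theorem~\ref{algebra homo}, precisely because the composite of two $\mathbb Q(q)$-algebra homomorphisms is again one. Your additional bookkeeping of the generator images is consistent with the explicit actions the paper records immediately afterwards.
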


From Theorems \ref{irreducible Aast module} and \ref{algebra homo}, it's easy to see that $\mathbb P$ is a $^{\imath} U(\mathcal S)$-module. The $ ^{\imath} U(\mathcal S)$-action is given as follows.

\begin{itemize}
	\item[1)] For the $\imath$quantum group $^{\imath}U(A_{2r+1})$ ($r\ge0$), the action is given by
	\begin{equation*}
		e_i\mathbf X^{\mathbf a}:=[2^{\delta_{i,r}}a_{i+1}]\mathbf X^{\mathbf a+ \mathbf e_i- \mathbf e_{i+1}},
		\quad f_i\mathbf X^{\mathbf a}:=[a_i]\mathbf X^{\mathbf a-\mathbf e_i+\mathbf e_{i+1}},
		\quad k_i\mathbf X^{\mathbf a}:=q^{a_i-2^{\delta_{i,r}}a_{i+1}}\mathbf X^{\mathbf a}.
	\end{equation*}
	
	\item[2)]
	For the $\imath$quantum group $^{\imath}U(A_{2r+2,1})$ ($r\ge 0$), the action is given by
	\begin{align*}
		&e_i\mathbf X^{\mathbf a}:=[{(-1)}^{\delta_{i,r}}a_{i+1}]\mathbf X^{\mathbf a+ \mathbf e_i-\mathbf e_{i+1}},
		\quad f_i\mathbf X^{\mathbf a}:=[a_{i}]\mathbf X^{\mathbf a-\mathbf e_i+\mathbf e_{i+1}},\\
		& k_i\mathbf X^{\mathbf a}:={(-1)}^{\delta_{i,r}}q^{a_i-a_{i+1}}\mathbf X^{\mathbf a},
		\quad t_{r+1}\mathbf X^{\mathbf a}:=-[a_{r+1}]\mathbf X^{\mathbf a}.
	\end{align*}
	
	\item[3)]
	For the $\imath$quantum group $^{\imath}U(A_{2r+1}^{(1)})$ ($r\ge1$), the action is given by
	\begin{align*}
		&e_i\mathbf X^{\mathbf a}:=[2^{\delta_{i,r}}a_{i+1}]\mathbf X^{\mathbf a+\mathbf e_i-\mathbf e_{i+1}},
		\quad f_i\mathbf X^{\mathbf a}:=[2^{\delta_{i,0}}a_i]\mathbf X^{\mathbf a-\mathbf e_i+\mathbf e_{i+1}},\\
		&k_i\mathbf X^{\mathbf a}:=q^{2^{\delta_{i,0}}a_i-2^{\delta_{i,r}}a_{i+1}-2\delta_{i,0}}\mathbf X^{\mathbf a},
	\end{align*}

     \item[4)]
     For the $\imath$quantum group $^{\imath}U(A_{1}^{(1)})$, the action is given by
	\begin{align*}
		e_0\mathbf X^{\mathbf a}:=[3a_1]\mathbf X^{\mathbf a+\mathbf e_0-\mathbf e_1},\quad
		f_0\mathbf X^{\mathbf a}:=[a_0]\mathbf X^{\mathbf a-\mathbf e_0+\mathbf e_1},\quad
		k_0\mathbf X^{\mathbf a}:=q^{a_0-3a_1-1}\mathbf X^{\mathbf a}.
	\end{align*}
	
	\item[5)]
	For the $\imath$quantum group $^{\imath}U(A_{2r+2,1}^{(1)})$ ($r\ge0$), the action is given by
	\begin{align*}
		&e_i\mathbf X^{\mathbf a}:=
		{(-1)}^{\delta_{i,r}}[a_{i+1}]
		\mathbf X^{\mathbf a+\mathbf e_i-\mathbf e_{i+1}},\quad f_i\mathbf X^{\mathbf a}:=[2^{\delta_{i,0}}a_i]\mathbf X^{\mathbf a-\mathbf e_i+\mathbf e_{i+1}},\\
		& k_i\mathbf X^{\mathbf a}:={(-1)}^{\delta_{i,r}}
		q^{2^{\delta_{i,0}}a_i-a_{i+1}-2\delta_{i,0}}\mathbf X^{\mathbf a},\quad t_{r+1}\mathbf X^{\mathbf a}:=-[a_{r+1}]\mathbf X^{\mathbf a}.
	\end{align*}
	
	\item[6)]
	For the $\imath$quantum group $^{\imath}U(A_{1,2r+2}^{(1)})$ ($r\ge0$), the action is given by
	\begin{align*}
		&e_i\mathbf X^{\mathbf a}:=
		[2^{\delta_{i,r+1}}a_{i}]
		\mathbf X^{\mathbf a+\mathbf e_{i-1}-\mathbf e_{i}},\quad f_i\mathbf X^{\mathbf a}:=[{(-1)}^{\delta_{i,1}}a_{i-1}]\mathbf X^{\mathbf a-\mathbf e_{i-1}+\mathbf e_i},\\
		& k_i\mathbf X^{\mathbf a}:={(-1)}^{\delta_{i,1}}
		q^{a_{i-1}-2^{\delta_{i,r+1}}a_i}\mathbf X^{\mathbf a},\quad t_0\mathbf X^{\mathbf a}:=-[a_0]\mathbf X^{\mathbf a}.
	\end{align*}
	
	\item[7)]
	For the $\imath$quantum group $^{\imath}U(A_{2r+1,2}^{(1)})$ ($r\ge1$), the action is given by
	\begin{align*}
		&e_i\mathbf X^{\mathbf a}:=
		{(-1)}^{\delta_{i,r}}[a_{i+1}]
		\mathbf X^{\mathbf a+\mathbf e_i-\mathbf e_{i+1}},\quad f_i\mathbf X^{\mathbf a}:=[a_i]\mathbf X^{\mathbf a-\mathbf e_i+\mathbf e_{i+1}},\\
		& k_i\mathbf X^{\mathbf a}:=
		{(-1)}^{\delta_{i,r}}q^{a_i-a_{i+1}}\mathbf X^{\mathbf a},\quad
		t_{0}\mathbf X^{\mathbf a}:=[a_{1}]\mathbf X^{\mathbf a},\quad t_{r+1}\mathbf X^{\mathbf a}:=-[a_{r+1}]\mathbf X^{\mathbf a}.	
	\end{align*}
\end{itemize}

The $^{\imath} U(\mathcal S)$-module $\mathbb P$ is called \textit{the oscillator representation} of $^{\imath} U(\mathcal S)$. 
The following commutative diagram explains the relations between $^{\imath} U(\mathcal S)$, $\mathbf{A}_q(\mathcal S)$, $\mathbf A_{q}(A_{r+1})$ and $\mathbb P$.

$$
\xymatrix{
	^{\imath}U(\mathcal S)\ar[rd]\ar[r]^{\varPhi_{\mathcal S}}&\mathbf{A}_q(\mathcal S)\ar[d]\ar[r]^{\iota}&\mathbf A_{q}(A_{r+1})\ar[ld]\\
	&{\rm End}(\mathbb P)&\\
}
$$

For $s\ge 0$, let
$
\Lambda_s=\{\mathbf a\in\mathbb Z_{\ge 0}^{r+2}|\sum_{i=0}^{r+1}a_i=s\}$. The polynomial ring $\mathbb P$ has the following direct sum decomposition
\begin{equation*}
	\mathbb P=\bigoplus_{s=0}^{\infty}{\mathbb P_s},\quad \mathbb P_s=\bigoplus_{\mathbf a\in \Lambda_s}{\mathbb{K}\mathbf X^{\mathbf a}}.
\end{equation*}

Since the actions of $e_i$, $f_i$, $k_i$ and $t_j$ on $\mathbb P$ preserve the degree of arbitrary monomial in $\mathbb P_s$, the subspace $\mathbb P_s$ is a $^{\imath} U(\mathcal S)$-module.

\begin{thm}\label{Uast irreducible module}
	The $^{\imath}U(\mathcal S)$-module $\mathbb P_s$ is irreducible if $q$ is not a root of unity.
\end{thm}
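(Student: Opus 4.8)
The plan is to deduce irreducibility of $\mathbb P_s$ from two separate facts: first, that every nonzero $^{\imath}U(\mathcal S)$-submodule of $\mathbb P_s$ contains at least one monomial $\mathbf X^{\mathbf a}$; and second, that the monomials $\{\mathbf X^{\mathbf a}\mid \mathbf a\in\Lambda_s\}$ are linked into a single connected family by the generators $e_i,f_i$, so that one monomial already generates all of $\mathbb P_s$. Since $\mathbb P_s$ is finite-dimensional and each generator preserves it (by the explicit action formulas recorded above), these two facts together force any nonzero submodule to equal $\mathbb P_s$. Note that, unlike in the proof of Theorem~\ref{irreducible Aast module}, we cannot collapse a general element down to $\mathbf X^{\mathbf 0}$ using the bare lowering operators $\mathfrak d_i$: on $\mathbb P_s$ the algebra $^{\imath}U(\mathcal S)$ acts only through the degree-preserving products $\mathfrak x_i\mathfrak d_{i+1}$, and this is exactly why a weight-separation argument is the natural substitute.

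For the first fact I would use the diagonal generators. In each case the $k_i$ (and the $t_j$, where they occur) act diagonally on the monomial basis: one has $k_i\mathbf X^{\mathbf a}=\zeta_i(\mathbf a)\mathbf X^{\mathbf a}$, where $\zeta_i(\mathbf a)$ is a sign times a power of $q$ whose exponent is a fixed linear form in $\mathbf a$, while $t_j\mathbf X^{\mathbf a}=\pm[a_j]\mathbf X^{\mathbf a}$ reads off a single exponent. These operators commute and are simultaneously diagonalized by the monomials, so the essential point is that their joint eigenvalue \emph{separates} the monomials of fixed degree. Indeed, from the exponents of the $k_i$, the values $a_j$ supplied by the $t_j$, and the relation $\sum_i a_i=s$, one can solve uniquely for the vector $\mathbf a$, because the resulting linear system in $a_0,\dots,a_{r+1}$ is nondegenerate and, since $q$ is not a root of unity, the maps $m\mapsto q^m$ (on $\mathbb Z$) and $n\mapsto[n]$ (on $\mathbb Z_{\ge 0}$) are injective. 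Hence the common eigenspaces are one-dimensional, each spanned by a single $\mathbf X^{\mathbf a}$; being stable under this commuting family of diagonalizable operators, any submodule is a direct sum of such eigenlines, so a nonzero submodule contains a monomial.

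For the second fact I would argue by connectivity. Put an edge between $\mathbf a$ and $\mathbf a+\mathbf e_i-\mathbf e_{i+1}$ whenever both lie in $\Lambda_s$; this graph is connected, since any composition of $s$ can be carried to $(s,0,\dots,0)$ by moving one unit at a time between neighbouring slots, and the operators $e_i,f_i$ collectively translate mass across all of the indices $0,\dots,r+1$. Each edge is realized by $e_i$ or $f_i$, whose scalar is of the form $[2^{\delta_{i,r}}a_{i+1}]$ or $[a_i]$ up to a sign, and this is nonzero precisely when the move is legal, again because $q$ not a root of unity forces $[m]\neq 0$ for $m\ge 1$. Therefore, starting from the monomial supplied by the first step, repeated application of the $e_i,f_i$ reaches every $\mathbf X^{\mathbf a}$ with $\mathbf a\in\Lambda_s$ with a nonzero coefficient, whence the submodule is all of $\mathbb P_s$.

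The main obstacle is the separation claim of the second paragraph, which is where the case analysis of Diagrams~\RomanNumeralCaps{1}--\RomanNumeralCaps{6} enters: the signs $(-1)^{\delta_{i,r}}$, the factors $2^{\delta_{i,r}}$ and $2^{\delta_{i,0}}$, and the additive constants such as $-2\delta_{i,0}$ appearing in the exponents of $k_i$ must be tracked, and in the affine and twisted cases one must confirm that the available $k_i$ and $t_j$ still pin down $\mathbf a$. I expect this to reduce in every case to checking that a fixed $(r+2)\times(r+2)$ linear system---the $k_i$-exponents, the $t_j$-values, and the degree relation $\sum_i a_i=s$---has nonzero determinant; the signs and the nonzero scalar prefactors never obstruct solvability, so once the system is written down the verification is routine.
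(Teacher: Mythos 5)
Your proposal is correct, and its second half is essentially the paper's argument: the paper also connects all monomials of $\Lambda_s$ through the hub $X_0^s$, first applying $\prod_{i=0}^{r}e_i^{\sum_{j=i}^{r}a_{j+1}}$ and then a product of $f_i$'s, with explicit nonzero coefficients built from $q$-factorials. Where you genuinely diverge is in the first half. The paper's proof simply opens with ``we fix a non-zero element $\mathbf X^{\mathbf a}\in\mathbb P_s'$,'' i.e.\ it takes for granted that a nonzero submodule contains a monomial, and supplies no analogue of the lexicographic-order reduction used in Theorem~\ref{irreducible Aast module}; as you correctly observe, that reduction does not transplant directly, since $^{\imath}U(\mathcal S)$ acts on $\mathbb P_s$ only through the degree-preserving operators $\mathfrak x_i\mathfrak d_{i+1}$. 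Your weight-separation step---checking that the commuting diagonal operators $k_i$ and $t_j$, together with the constraint $\sum_i a_i=s$, have pairwise distinct joint eigenvalues on the monomials when $q$ is not a root of unity, so that every invariant subspace is a sum of monomial lines---fills exactly this gap, at the price of the case-by-case nondegeneracy check you describe (which does hold for all of Diagrams \RomanNumeralCaps{1}--\RomanNumeralCaps{6}: in each case the kernel of the $k_i$-exponent forms is spanned by a vector with strictly positive coordinate sum, and the constant shifts and signs in the eigenvalues are independent of $\mathbf a$, hence harmless). An alternative repair closer in spirit to Theorem~\ref{irreducible Aast module} would be to apply the collapsing operator to the lexicographically maximal monomial occurring in a general element of the submodule; either route works, but yours is the more systematic one and is what a careful write-up of this theorem actually needs.
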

\begin{proof}
	Let $\mathbb P_s'$ be a non-zero submodule of $\mathbb P_s$. We fix a non-zero element
	$\mathbf X^{\mathbf a}\in \mathbb P_s'$. It follows that
	\begin{align*}
		(\prod_{i=0}^{r}{e_i}^{\sum_{j=i}^{r}a_{j+1}}) \mathbf X^{\mathbf a}
		=&(\prod_{i=0}^{r}{(\mathfrak{x}_i\mathfrak{d}_{i+1})}^{\sum_{j=i}^{r}a_{j+1}}) \mathbf X^{\mathbf a}
		=(\prod_{i=1}^{r+1}{[a_i+a_{i+1}+\cdots+a_{r+1}]}^{\xi_i}!)X_0^{s}\in \mathbb P_s'.
	\end{align*}
	
	Since the coefficient is not zero, we get $X_0^{s}\in \mathbb P_s'$. Let $\mathbf X^{\mathbf b}$  be any monomial in $ \mathbb P$, where $\mathbf b=(b_i)_{0\leq i\leq r+1}$. It follows that
	\begin{align*}
		(\prod_{i=0}^rf_i^{s-\sum_{j=0}^ib_j})X_0^{s}
		=&(\prod_{i=0}^{r}{(\mathfrak{x}_{i+1}\mathfrak{d}_{i})}^{s-\sum_{j=0}^ib_j}) X_0^{s}
		=(\frac{{[s]}^{\xi_0}!}{{[b_0]}^{\xi_0}!}\prod_{i=1}^r\frac{{[s-(\sum_{j=0}^{i-1}b_j)]}^{\xi_i}!}{{[b_i]}^{\xi_i}!})\mathbf X^{\mathbf b}\in \mathbb P'_s.
	\end{align*}
	
	The relation $\mathbb P_s= \mathbb P_s'$ holds automatically due to the non-zero coefficient of $\mathbf X^{\mathbf b}$. Hence the $^{\imath}U(\mathcal S)$-module $\mathbb P_s$ is irreducible.
\end{proof}

\subsection{Crystal basis of irreducible module}
In this subsection, we only consider the $\imath$quantum groups $^{\imath}U(A_{2r+1})$, $^{\imath} U(A_{2r+1}^{(1)})$ and $^{\imath} U(A_{1}^{(1)})$.

Let $\mathbf X^{(\mathbf a)}=\prod_{i=0}^{r+1}X_i^{{(a_i)}_{\xi_i}}$ for a sequence $\mathbf a\in\Lambda_s$. It's easy to verify:
\begin{equation*}
	\mathbf X^{(\mathbf a)}=f_i^{{(a_{i+1})}_{\xi_{i+1}}}\mathbf X^{(\mathbf a+a_{i+1}(\mathbf e_i-\mathbf e_{i+1}))}.
\end{equation*}

Let $\mathbb P_s=\bigoplus_{\mathbf a\in\Lambda_s}\mathbb P_s^{\mathbf a}$, where $\mathbb P_s^{\mathbf a}=\mathbb Q(q) \mathbf X^{(\mathbf a)}$. We define Kashiwara operators $\widetilde{e}_i$ and $\widetilde{f}_i$ ($0\leq i\leq r$) on $\mathbb P_s$ as follows:
\begin{equation}\label{Kashiwara operators of Uj}
	\widetilde{e}_i\mathbf X^{(\mathbf a)}=f_i^{{(a_{i+1}-1)}_{\xi_{i+1}}}\mathbf X^{(\mathbf a+a_{i+1}(\mathbf e_i-\mathbf e_{i+1}))},
	\quad\widetilde{f}_i\mathbf X^{(\mathbf a)}=f_i^{{(a_{i+1}+1)}_{\xi_{i+1}}}\mathbf X^{(\mathbf a+a_{i+1}(\mathbf e_i-\mathbf e_{i+1}))}.
\end{equation}

Let  $\mathbb A_0 =\{ f/g \in \mathbb Q(q)|f,g \in \mathbb Q[q], g(0) \neq 0 \}$. The crystal lattice and crystal basis of the irreducible module $\mathbb P_s$ are defined as follows.

\begin{Def}\label{crystal lattice}
	Let $\mathcal L(s)$ be an $\mathbb A_0$-submodule of $\mathbb P_s$. We say that $\mathcal L(s)$ is a crystal lattice of $\mathbb P_s$ if
	\begin{enumerate}
		\item[$(L1)$] $\mathcal L(s)$ is a free $\mathbb A_0$-module of rank $\dim_{\mathbb Q(q)} \mathbb P_s$, and $\mathbb Q(q) \otimes_{\mathbb A_0} \mathcal L(s) = \mathbb P_s$,
		\item[$(L2)$] $\mathcal L(s) = \bigoplus_{\mathbf a \in \Lambda_s} \mathcal L(s)_{\mathbf a}$, where $\mathcal L(s)_{\mathbf a} := \mathcal L(s) \cap \mathbb P_s^{\mathbf a}$,
		\item[$(L3)$] $\widetilde{f}_i(\mathcal L(s)) \subset \mathcal L(s)$ and $\widetilde{e}_i(\mathcal L(s)) \subset \mathcal L(s)$.
	\end{enumerate}
\end{Def}

\begin{Def}\normalfont
	A crystal basis of the module $\mathbb P_s$ is a pair ($\mathcal L(s),\mathcal B(s)$) such that
	\begin{enumerate}
		\item[$(B1)$] $\mathcal L(s)$ is a crystal lattice of $\mathbb P_s$,
		\item[$(B2)$] $\mathcal B(s)$ is a $\mathbb Q$-basis of $\mathcal L(s)/q\mathcal L(s)$,
		\item[$(B3)$] $\mathcal B(s) = \bigsqcup_{\lambda \in \Lambda_s} \mathcal B(s)_\lambda$, where $\mathcal B(s)_\lambda := \mathcal B(s) \cap (\mathcal L(s)_\lambda / q\mathcal L(s)_\lambda)$,
		\item[$(B4)$] $\widetilde{f}_i(\mathcal B(s)) \subset \mathcal B(s) \sqcup \{0\}$ and $\widetilde{e}_i(\mathcal B(s)) \subset \mathcal B(s) \sqcup \{0\}$,
		\item[$(B5)$] for each $b,b' \in \mathcal B(s)$, one has $\widetilde{f}_i(b) = b'$ if and only if $b = \widetilde{e}_i(b')$.
	\end{enumerate}
\end{Def}

From Theorem \ref{Uast irreducible module},  $\mathbb P_s$ is an irreducible $^{\imath} U(\mathcal S)$-module with $\mathbb Q(q)$-basis $\{\mathbf X^{(\mathbf a)}|\mathbf a\in \Lambda_s\}$. We define
\begin{equation}\label{crystal lattice and basis of Uj}
	\mathcal L^{\ast}(s)=\bigoplus_{\mathbf a\in\Lambda_s}{\mathcal L^{\ast}(s)}_{\mathbf a},\quad \mathcal B^{\ast}(s)=\{\mathbf X^{(\mathbf a)}+q\mathcal L^{\ast}(s)|\mathbf a\in\Lambda_s\},
\end{equation}
where ${\mathcal L^{\ast}(s)}_{\mathbf a}=\mathbb A_0 \mathbf X^{(\mathbf a)}$.

\begin{thm}\label{crystal basis of Uj}
	The pair ($\mathcal L^{\ast}(s),\mathcal B^{\ast}(s)$) is a crystal basis of $\mathbb P_s$.	
\end{thm}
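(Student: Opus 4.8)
The plan is to reduce the whole statement to a single explicit computation, namely the action of the Kashiwara operators $\widetilde e_i, \widetilde f_i$ on the monomial basis $\{\mathbf X^{(\mathbf a)} \mid \mathbf a \in \Lambda_s\}$; once that action is pinned down, all five axioms follow by inspection.

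First I would establish the combinatorial form of $\widetilde e_i$ and $\widetilde f_i$ on basis vectors. The uniform module formula $f_i \mathbf X^{\mathbf a} = [\xi_i a_i]\,\mathbf X^{\mathbf a - \mathbf e_i + \mathbf e_{i+1}}$, which follows from $f_i = \mathfrak x_{i+1}\mathfrak d_i$ together with the action \eqref{A_j action} of $\mathfrak d_i$, gives upon iterating and dividing by $[m]^{\xi_{i+1}}!$ the general identity $f_i^{(m)_{\xi_{i+1}}}\mathbf X^{(\mathbf b)} = \mathbf X^{(\mathbf c)}$, where $\mathbf b = \mathbf a + a_{i+1}(\mathbf e_i - \mathbf e_{i+1})$ has $b_{i+1}=0$ and $c_i = b_i - m$, $c_{i+1} = m$. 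Specialising $m = a_{i+1}\mp 1$ in \eqref{Kashiwara operators of Uj} then gives, for $0 \le i \le r$,
\[
\widetilde e_i \mathbf X^{(\mathbf a)} =
\begin{cases} \mathbf X^{(\mathbf a + \mathbf e_i - \mathbf e_{i+1})}, & a_{i+1} \ge 1,\\ 0, & a_{i+1} = 0, \end{cases}
\qquad
\widetilde f_i \mathbf X^{(\mathbf a)} =
\begin{cases} \mathbf X^{(\mathbf a - \mathbf e_i + \mathbf e_{i+1})}, & a_i \ge 1,\\ 0, & a_i = 0. \end{cases}
\]
The only point requiring attention is the bookkeeping with the $q$-factorials: the $[\,\cdot\,]$-factors produced by the $f_i$-action assemble into $[b_i-m]^{\xi_i}!$ while the divided power contributes $[m]^{\xi_{i+1}}!$, and these are precisely the denominators of $\mathbf X^{(\mathbf c)}$, so that the images are honest basis vectors with coefficient $1$ rather than $\mathbb Q(q)$-multiples. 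This closes in every admissible diagram once one tracks the relevant values $\xi_i,\xi_{i+1}$ (for instance $\xi_{r+1}=2$ in the $A_{2r+1}$-type cases and $\xi_1 = 3$ for $A_1^{(1)}$); this verification is the main, and essentially the only nontrivial, step.

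Granting the explicit action, the lattice axioms are immediate. By construction $\mathcal L^{\ast}(s) = \bigoplus_{\mathbf a \in \Lambda_s}\mathbb A_0 \mathbf X^{(\mathbf a)}$ is $\mathbb A_0$-free of rank $\dim_{\mathbb Q(q)}\mathbb P_s$ with $\mathbb Q(q)\otimes_{\mathbb A_0}\mathcal L^{\ast}(s) = \mathbb P_s$, which is $(L1)$; the direct-sum structure forces $\mathcal L^{\ast}(s)\cap \mathbb P_s^{\mathbf a} = \mathbb A_0\mathbf X^{(\mathbf a)} = \mathcal L^{\ast}(s)_{\mathbf a}$, which is $(L2)$; and since $\widetilde e_i, \widetilde f_i$ send each $\mathbf X^{(\mathbf a)}$ to a basis vector or to $0$, they preserve $\mathcal L^{\ast}(s)$, which is $(L3)$. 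Hence $(B1)$ holds.

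For the remaining axioms I would pass to $\mathcal L^{\ast}(s)/q\mathcal L^{\ast}(s)$, where $\mathbb A_0/q\mathbb A_0\cong\mathbb Q$; the classes $\mathbf X^{(\mathbf a)} + q\mathcal L^{\ast}(s)$ then form a $\mathbb Q$-basis indexed by the weights $\mathbf a\in\Lambda_s$, giving $(B2)$ and $(B3)$. Axiom $(B4)$ holds because $\widetilde e_i, \widetilde f_i$ permute these classes with $0$ as the only other output, and $(B5)$ holds because the two displayed formulas are mutually inverse along each $i$-string: if $\widetilde f_i \mathbf X^{(\mathbf a)} = \mathbf X^{(\mathbf a - \mathbf e_i + \mathbf e_{i+1})}\neq 0$ then the target has $(i+1)$-entry $a_{i+1}+1\ge 1$, so applying $\widetilde e_i$ returns $\mathbf X^{(\mathbf a)}$, and symmetrically. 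Finally, since $A_{2r+1}$, $A_{2r+1}^{(1)}$ and $A_1^{(1)}$ differ only in the values of $\xi_i$ and the corresponding scalar factors in the module action recorded earlier, it suffices to carry out the first step once while tracking those values, after which the rest of the argument is uniform.
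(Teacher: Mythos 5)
Your proposal is correct and follows essentially the same route as the paper: the paper likewise takes $(L1)$--$(L3)$ and $(B2)$--$(B4)$ as immediate from the definitions and reduces the substance to the divided-power computation showing $\widetilde e_i\widetilde f_i\mathbf X^{(\mathbf a)}=\mathbf X^{(\mathbf a)}$, which is exactly the bookkeeping you carry out. The only difference is presentational: you first extract the explicit action $\widetilde e_i\mathbf X^{(\mathbf a)}=\mathbf X^{(\mathbf a+\mathbf e_i-\mathbf e_{i+1})}$, $\widetilde f_i\mathbf X^{(\mathbf a)}=\mathbf X^{(\mathbf a-\mathbf e_i+\mathbf e_{i+1})}$ (or $0$) and then read off all five axioms, whereas the paper verifies $(B5)$ directly without stating that formula.
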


\begin{proof}
	It's straightforward to verify that the conditions $(L1)$, $(L2)$ and $(L3)$ hold in Definition \ref{crystal lattice} due to the definition of Kashiwara operators in \eqref{Kashiwara operators of Uj}. Hence $\mathcal L^{\ast}(s)$ is a crystal lattice of $\mathbb P_s$. From the definition of $\mathcal B^{\ast}(s)$ in \eqref{crystal lattice and basis of Uj}, it's easy to see $\mathcal B^{\ast}(s)$ is a $\mathbb Q$-basis of $\mathcal L^{\ast}(s)/q\mathcal L^{\ast}(s)\cong\mathbb Q\otimes_{\mathbb A_0} \mathcal L^{\ast}(s)$. The conditions $(B3)$ and $(B4)$ are easy to verify. Now let us verify $(B5)$:
	
	Let $b=\mathbf X^{(\mathbf a)}$ and $b'=f_i^{{(a_{i+1}+1)}_{\xi_{i+1}}}\mathbf X^{(\mathbf a+a_{i+1}(\mathbf e_i-\mathbf e_{i+1}))}$. Then we have $\widetilde{f}_ib=b'$.
	\begin{align*}
		\widetilde{e}_ib'=&\widetilde{e}_i f_i^{{(a_{i+1}+1)}_{\xi_{i+1}}}\mathbf X^{(\mathbf a+a_{i+1}(\mathbf e_i-\mathbf e_{i+1}))}=\widetilde{e}_i\frac{f_i}{[\xi_{i+1}(a_{i+1}+1)]} f_i^{{(a_{i+1})}_{\xi_{i+1}}}\mathbf X^{(\mathbf a+a_{i+1}(\mathbf e_i-\mathbf e_{i+1}))}\\
		=&\widetilde{e}_i\frac{\mathbf x_{i+1}\mathbf d_i}{[\xi_{i+1}(a_{i+1}+1)]} \mathbf X^{(\mathbf a)}=\widetilde{e}_i\mathbf X^{(\mathbf a-\mathbf e_i+ \mathbf e_{i+1})}=f_i^{{(a_{i+1})}_{\xi_{i+1}}}\mathbf X^{(\mathbf a-\mathbf e_i+\mathbf e_{i+1}+(a_{i+1}+1)(\mathbf e_i-\mathbf e_{i+1}))}\\
		=&\frac{f_i^{a_{i+1}}}{{[a_{i+1}]}^{\xi_{i+1}}!}\mathbf X^{(\mathbf a+a_{i+1}(\mathbf e_i-\mathbf e_{i+1}))}=\mathbf X^{(\mathbf a)}=b.
	\end{align*}
	
	Hence ($\mathcal L^{\ast}(s),\mathcal B^{\ast}(s)$) is a crystal basis of $\mathbb P_s$.
\end{proof}

\begin{ex}
	Let $r=1$ and $s=3$, we use $(a_0a_1a_2)$ to represent $X_0^{{(a_0)}_{\xi_0}}X_1^{{(a_1)}_{\xi_1}}X_2^{{(a_2)}_{\xi_2}}+q\mathcal L^\ast(3)$. We use the red arrow to represent the action of $\widetilde{f}_0$ and the blue arrow to represent the action of $\widetilde{f}_1$. The crystal graph is described as follows:
	\begin{center}
		\begin{tikzpicture}[xscale=1.5,yscale=1.35]
			\node at (0,6) (L6) {$(300)$};
			\node at (1,5) (L5) {$(210)$};
			\node at (0,4) (L4a) {$(201)$};
			\node at (2,4) (L4b) {$(120)$};
			\node at (1,3) (L3a) {$(111)$};
			\node at (3,3) (L3b) {$(030)$};
			\node at (0,2) (L2a) {$(102)$};
			\node at (2,2) (L2b) {$(021)$};
			\node at (1,1) (L1) {$(012)$};
			\node at (0,0) (L0) {${(003)}$};
			
			\draw[thick,->,red  ] (L6) -- (L5);
			\draw[thick,->,red  ] (L5) -- (L4b);
			\draw[thick,->,red  ] (L4b) -- (L3b);
			\draw[thick,->,red  ] (L4a) -- (L3a);
			\draw[thick,->,red  ] (L3a) -- (L2b);
			\draw[thick,->,red  ] (L2a) -- (L1);

			\draw[thick,->,blue ] (L5) -- (L4a);
			\draw[thick,->,blue ] (L4b) -- (L3a);
			\draw[thick,->,blue ] (L3a) -- (L2a);
			\draw[thick,->,blue ] (L3b) -- (L2b);
			\draw[thick,->,blue ] (L2b) -- (L1);
			\draw[thick,->,blue ] (L1) -- (L0);
		\end{tikzpicture}
	\end{center}
\end{ex}


\begin{thebibliography}{11}\frenchspacing

\bibitem{BKLW}
H. Bao, J. Kujawa, Y. Li and W. Wang,
{\em  Geometric Schur duality of classical type}, Transform. Groups {\bf 23} (2018), 329--389.

\bibitem{BSWW} H. Bao, P. Shan, W. Wang and B. Webster,
{\it Categorification of quantum symmetric pairs I}, Quantum Topology {\bf 9} (2018), 643--714.

\bibitem{BW18a}
H. Bao and W. Wang,
{\em A new approach to Kazhdan-Lusztig theory of type $B$ via quantum symmetric pairs}, Ast\'erisque {\bf 402} (2018), vii+134pp.

\bibitem{BLM}
A. Beilinson, G. Lusztig and R. MacPherson,
{\em A geometric setting for the quantum deformation of $GL_n$},
Duke Math. J. {\bf 61} (1990), 655--677.

\bibitem{Br13} T. Bridgeland,
{\em Quantum groups via Hall algebras of complexes}, Ann. Math. {\bf 177} (2013), 739--759.

\bibitem{CLW18} X. Chen, M. Lu and W. Wang,
{\em A Serre presentation for the $\imath$quantum groups}, Transform. Groups {\bf 26} (2021), 827--857.

\bibitem{Dr86} V. Drinfeld,
{\em Quantum groups}, Proc. Int. Congr. Math. Berkeley {\bf 1}  (1987), 798--820.

\bibitem{DF15}  J. Du, and Q. Fu,
{\em Quantum affine $\mathfrak{gl}_n$ via Hecke algebras}, Adv. Math. {\bf 282} (2015), 23--46.

\bibitem{DLZ} 
 J. Du, Y. Lin, Z. Zhou,
{\em Quantum queer supergroups via $v$-differential operators}, J.  Algebra. {\bf 599} 
(2022), 48-103.


\bibitem{DW20}
J. Du and Y. Wu,
{\em A new realisation of the $\imath$-quantum groups $\mathbf U^{\jmath}(n)$}, J. Pure Appl. Algebra {\bf 226} (2022), 106793.

\bibitem{DW21}
J. Du and Y. Wu,
{\em The i-quantum group $\mathbf U^i (n) $}, arXiv:2111.08898.

\bibitem{DZ}
J. Du, and Z. Zhou,
{\em The regular representation of $U_v(\mathfrak{gl}_{m|n})$}, Proc. Amer. Math. Soc. {\bf 148}
(2020), 111–124.

\bibitem{FLLLW20}
Z. Fan, C. Lai, Y. Li, L. Luo and W. Wang,
{\em  Affine flag varieties and quantum symmetric pairs}, Mem. Amer. Math. Soc. {\bf 265} (2020), no. 1285, v+123pp.

\bibitem{FLLLWb}
Z. Fan, C. Lai, Y. Li, L. Luo and W. Wang,
{\em  Affine Hecke algebras and quantum symmetric pairs},  Mem. Amer. Math. Soc. (to appear), arXiv:1609.06199v2.

\bibitem{FLLLWW}
Z. Fan, C. Lai, Y. Li, L. Luo, W. Wang and H. Watanabe,
{\em  Quantum Schur duality of affine type $C$ with three parameters}, Math. Res. Lett. {\bf 27} (2020), 79--114.

\bibitem{FMX}
Z. Fan, H. Ma and H. Xiao,
{\em  Equivariant K-theory approach to $\imath $-quantum groups}, Publ. Res. Inst. Math. Sci.(to appear), 2020, arXiv:1911.00851.

\bibitem{GV93} V. Ginzburg and E. Vasserot, {\em Langlands reciprocity for affine quantum groups of type $A_n$}, Internat. Math. Res. Notices {\bf 3} (1993), 67--85.

\bibitem{H} T. Hayashi, {\em $Q$-Analogues of Clifford and Weyl algebras—spinor and oscillator representations of quantum enveloping algebras}, Commun. Math. Phys. {\bf 127} (1990), 129--144.

\bibitem{HK}
J. Hong and S.-J. Kang, \emph{Introduction to Quantum Groups and Crystal Bases}, Grad. Stud. Math. {\bf 42}, Amer. Math. Soc. 2002.


\bibitem{H00}
N. H. Hu,
	{\em Quantum divided power algebra, $q$-derivatives, and some new quantum groups}, J. Algebra
	{\bf 232} (2000), 507–540.


\bibitem{J86}
M. Jimbo,
{\em A $q$-difference analogue of $\mathbf U(\mathbf g)$, Hecke algebra and the Yang Baxter equation},
Lett. Math. Phys. {\bf 11} (1986), 247--252.

\bibitem{Ka91} M. Kashiwara, {\em On crystal bases of the $q$-analogue of the universal enveloping algebra}, Duke Math. J. {\bf 63} (1991), 465--516.

\bibitem{LW19a} M. Lu and W. Wang, {\em Hall algebras and quantum symmetric pairs I: foundations}, Proc. Lond. Math. Soc. {\bf 124} (2022), 1--82.

\bibitem{LW21a} M. Lu and W. Wang,
{\em Hall algebras and quantum symmetric pairs II: reflection functors}, Commun. Math. Phys. {\bf 381} (2021), 799--855.

\bibitem{LW19b} M. Lu and W. Wang, {\em Hall algebras and quantum symmetric pairs III: quiver varieties}, Adv. Math. {\bf 393} (2021), 108071.

\bibitem{LW20} M. Lu and W. Wang,
{\em Hall algebras and quantum symmetric pairs of Kac-Moody type}, arXiv:2006.06904.

\bibitem{Lus93} G. Lusztig,
{\em Introduction to quantum groups},
Modern Birkh\"auser Classics, Reprint of the 1993 Edition,
Birkh\"auser, Boston, 2010.

\bibitem
{R90} C.M. Ringel,
{\em Hall algebras and quantum groups},
Invent. Math. {\bf 101} (1990), 583--591.

\bibitem{W20} H. Watanabe,
{\em Crystal basis theory for a quantum symmetric pair $(\mathbf U,\mathbf U^\jmath)$}, Int. Math. Res. Not. {\bf 22} (2020),  8292--8352.

\bibitem{W21} H. Watanabe, {\em Global crystal bases for integrable modules over a quantum symmetric pair of type AIII}, Represent. Theory {\bf 25} (2021), 27--66.



\bibitem{ZH} J. Zhang and N. H. Hu, {\em Realization of $U_q(\mathfrak{sp}_{2n})$ within the differential algebra on quantum symplectic
	space, Symmetry}, Integrability and Geometry: Methods and Applications, SIGMA {\bf 13} (2017), 084, 21
pages.


\end{thebibliography}
\end{document}